\def\qed{\mbox{$\Box$}}
\def\boxp{\mbox{$\Box$}}
\newenvironment{Proof}{\par\noindent{\bf Proof.\ }}{}
\newenvironment{proof}{\par\noindent{\bf Proof.\ }}{\hfill\qed\\ }
\begin{document}
\title{ Infinite rate mutually catalytic branching
in infinitely many colonies. Construction, characterization and
convergence
}
\author{
Achim Klenke
\\Institut f{\"u}r Mathematik
\\Johannes Gutenberg-Universit{\"a}t Mainz
\\Staudingerweg 9
\\D-55099 Mainz
\\Germany
\\math@aklenke.de
\and
Leonid Mytnik
\\Faculty of Industrial Engineering\\ and Management
\\Technion -- Israel Institute of Technology
\\Haifa 32000\\ Israel
\\leonid@ie.technion.ac.il}
\date{May 30, 2011\\{\small First submitted on January 05, 2009}
}
\maketitle
\mbox{}\vspace{1cm}
\begin{abstract}
We construct a mutually
catalytic branching process on a countable site space with infinite ``branching rate''.
The finite rate mutually catalytic model, in which the rate of branching
of one population at a site is proportional to the mass of the other population at that
site, was  introduced by Dawson and Perkins in~\cite{bib:dp98}. We show that our model is the limit
for a class of models and in particular
for  the Dawson-Perkins model as the rate of branching goes to infinity. Our process is characterized as the unique solution to a martingale problem. We also give a characterization of the process as
a weak solution of an infinite system of stochastic integral equations driven by a Poisson noise.
\end{abstract}
AMS Subject Classification: 60K35; 60K37; 60J80; 60J65; 60J35.\\
Keywords: mutually catalytic branching; martingale problem; duality; stochastic differential equations.
\footnote{This work is partly funded by the German Israeli Foundation with grant
number G-807-227.6/2003}

\thispagestyle{empty}

\clearpage

\newfam\msbmfam\font\tenmsbm=msbm10\textfont
\msbmfam=\tenmsbm\font\sevenmsbm=msbm7
\renewcommand{\theequation}{\arabic{section}.\arabic{equation}}
\renewcommand{\thesection}{\arabic{section}}
\scriptfont\msbmfam=\sevenmsbm\def\bb#1{{\fam\msbmfam #1}}

\newcounter{archapter}[section]
\newtheorem{theorem}{Theorem}[section]
\newtheorem{lemma}[theorem]{Lemma}
\newtheorem{definition}[theorem]{Definition}
\newtheorem{remark}[theorem]{Remark}
\newtheorem{proposition}[theorem]{Proposition}
\newtheorem{case}{Case}[archapter]
\newtheorem{corollary}[theorem]{Corollary}
\newtheorem{fact}[theorem]{Fact}
\newtheorem{assumption}[theorem]{Assumption}
\newcommand{\gdm}{\hfill\qed}
\def\eope{\tag*{\boxp}}

\newcommand{\bS}{\mathbf{S}}
\newcommand{\bA}{\mathbf{A}}
\def\AAb{\mathbb{A}}\def\BB{\mathbb{B}}\def\CC{\mathbb{C}}
\def\EE{\mathbb{E}}\def\GG{\mathbb{G}}\def\HH{\mathbb{H}}\def\KK{\mathbb{K}}
\def\LL{\mathbb{L}}\def\MM{\mathbb{M}}\def\FF{\mathbb{F}}\def\NN{\mathbb{N}}\def\PP{\mathbb{P}}\def\QQ{\mathbb{Q}}
\def\di{D}
\def\dim{\di_m}
\def\LIBE{\LL_\infty^{\beta,E}}
\def\LIBEPP{\LL_\infty^{\beta,E,++}}
\def\LB{\LL^{\beta}}
\def\LBE{\LL^{\beta,E}}
\def\LBT{\LL^{\beta,2}}
\def\LFE{\LL^{f,E}}
\def\LFT{\LL^{f,2}}
\def\LIB{\LL_\infty^{\beta}}
\def\Re{\mathop{\mathrm{Re}}}
\def\Im{\mathrm{Im}}
\def\RR{\mathbb{R}}\def\WW{\mathbb{W}}\def\ZZ{\mathbb{Z}}
\def\VV{\mathbb{V}}
\def\cA{{\mathcal{A}}}\def\cB{{\mathcal{B}}}\def\cL{{\mathcal{L}}}
\def\cD{{\mathcal{D}}}\def\cE{{\mathcal{E}}}\def\cF{{\mathcal{F}}}
\def\cH{{\mathcal{H}}}\def\cI{{\mathcal{I}}}\def\cJ{{\mathcal{J}}}
\def\cL{{\mathcal{L}}}
\def\cM{{{{\color{red}\underbrace{\color{black}\mathcal{M}}_{}}}}}
\def\cN{{{{\color{red}\underbrace{\color{black}\mathcal{N}}_{}}}}}
\def\CF{{\mathcal{F}}}
\def\CN{{\mathcal{N}}}
\def\CM{{\mathcal{M}}}
\def\cP{{\mathcal{P}}}\def\cS{{\mathcal{S}}}\def\cT{{\mathcal{T}}}\def\cV{{\mathcal{V}}}
\def\cW{{\mathcal{W}}}\def\cX{{\mathcal{X}}}\def\cY{{\mathcal{Y}}}\def\cZ{{\mathcal{Z}}}
\def\cG{{\mathcal{G}}}
\def\cA{{\mathcal{A}}}
\def\ep{\varepsilon}
\def\ve{\varepsilon}
\def\ve{\epsilon}
\def\LSC{\Gamma}
\newcommand\ap[1]{\bar{#1}}
\newcommand\apm[1]{#1^{(m)}}
\newcommand\app[1]{#1^{(m,\ve)}}
\newcommand\apv[2]{#1^{(#2)}}
\def\apX{\ap{X}}
\def\apmX{\apv{X}{m}}
\def\apmX{\apv{X}{m}}
\def\apmCA{\apv{\CA}{m}}
\def\apmA{\apv{\bA}{m}}
\def\apmCS{\apv{\cS}{m}}
\def\apmbS{\apv{\bS}{m}}
\def\apmp{\apv{p}{m}}
\def\apmeX{\apv{X}{m,\ve}}
\def\appX{\app{X}}
\def\appIN{I^{0,(m,\ve)}}
\def\apmIN{I^{0,(m)}}
\newcommand{\attop}[2]{\genfrac{}{}{0pt}{1}{#1}{#2}}
\renewcommand{\cases}[1]{\left\{\begin{array}{rl}#1\end{array}\right.}
\newcommand{\equ}[1]{(\ref{#1})}
\newcommand{\mbu}{\quad\mbox{ and }\quad}
\newcommand{\mbs}[1]{\mbox{ \;#1\; }}
\newcommand{\mbsl}[1]{\mbox{ \;#1}}
\newcommand{\mbsr}[1]{\mbox{#1\; }}
\newcommand{\mas}{\quad\mbox{\;as \;}}
\newcommand{\mf}{\quad\mbox{\;for \;}}
\newcommand{\mfa}{\quad\mbox{\;for all \;}}
\newcommand{\mfj}{\quad\mbox{\;for any \;}}
\newcommand{\mfso}{\quad\mbox{\;for some \;}}
\newcommand{\mfev}{\quad\mbox{\;for every \;}}
\newcommand{\mfasts}{\quad\mbox{almost surely}}
\newcommand{\mfastu}{\quad\mbox{almost everywhere}}
\newcommand{\mfu}{\quad\mbox{a.e.}}
\newcommand{\mfs}{\quad\mbox{a.s.}}
\newcommand{\N}{\mathbb{N}}
\newcommand{\R}{\mathbb{R}}
\newcommand{\C}{\mathbb{C}}
\newcommand{\Q}{\mathbb{Q}}
\newcommand{\Z}{\mathbb{Z}}
\newcommand{\limv}{\mathop{\mathrm{v\hspace{-0.04em}{\scriptscriptstyle\raisebox{0.05em}{-}}\hspace{-0.04em}lim}}\limits}
\newcommand{\Dlim}{\,\stackrel{\mathcal{D}}{\longrightarrow}\,}
\newcommand{\Dlimn}
 {\;\mbox{\raisebox{-5pt}{${\attop{\DS\Dlim}{n\to\infty}}$}\;}}
\newcommand{\To}[1]{\,\stackrel{#1}{\longrightarrow}\,}
\newcommand{\Toi}[1]{\To{#1 \rightarrow \infty}}
\newcommand{\Ton}[1]{\To{#1 \rightarrow 0}}
\newcommand{\limn}{\Toi{n}}
\newcommand{\liml}{\Toi{l}}
\newcommand{\limk}{\Toi{k}}
\newcommand{\limt}{\Toi{t}}
\newcommand{\limT}{\Toi{T}}
\newcommand{\limm}{\Toi{m}}
\newcommand{\limN}{\Toi{N}}
\renewcommand{\P}{\mathbf{P}}
\newcommand{\E}{\mathbf{E}}
\newcommand{\Var}{\mathbf{Var}}
\newcommand{\Cov}{\mathbf{Cov}}
\newcommand{\wlimn}{\stackrel{n \rightarrow \infty}{\Longrightarrow}}
\newcommand{\wlimm}{\stackrel{m \rightarrow \infty}{\Longrightarrow}}
\newcommand{\wlimk}{\stackrel{k \rightarrow \infty}{\Longrightarrow}}
\newcommand{\wlimt}{\stackrel{t \rightarrow \infty}{\Longrightarrow}}
\newcommand{\wlimN}{\stackrel{N \rightarrow \infty}{\Longrightarrow}}
\def\CA{\mathcal{A}}
\def\1{\mathbf{1}}
\newcommand{\DS}{\displaystyle}
\newcommand{\TS}{\textstyle}
\newcommand{\ARG}{\,\boldsymbol{\cdot}\,}
\newcommand{\mfalls}{\quad\mbox{if \;}}
\newcommand{\msonst}{\quad\mbox{otherwise}}
\def\mtimes{\diamond}
\def\lb{\langle\hspace*{-0.13em}\langle}
\def\Lb{\big\langle\hspace*{-0.20em}\big\langle}
\def\rb{\rangle\hspace*{-0.13em}\rangle}
\def\Rb{\big\rangle\hspace*{-0.20em}\big\rangle}
\def\lbb{\langle\hspace*{-0.13em}\langle\hspace*{-0.13em}\langle}
\def\Lbb{\big\langle\hspace*{-0.20em}\big\langle\hspace*{-0.20em}\big\langle}
\def\rbb{\rangle\hspace*{-0.13em}\rangle\hspace*{-0.13em}\rangle}
\def\Rbb{\big\rangle\hspace*{-0.20em}\big\rangle\hspace*{-0.20em}\big\rangle}
\section{Introduction and main results}
\label{S1}
\subsection{Background and Motivation}
\label{S1.1}
In~\cite{bib:dp98} Dawson and Perkins considered the following mutually catalytic model:
\begin{equation}
\label{E1.1}
\begin{aligned}
Y_{i,t}(k)\;&=\;Y_{i,0}(k)+\int_0^t \sum_{l\in S} \CA(k,l) Y_{i,s}(l)\,ds \\
&\phantom{=}\;+\int_0^t \big(\gamma Y_{1,s}(k)Y_{2,s}(k)\big)^{1/2}\,dW_{i,s}(k),\;\;t\geq 0, k\in S,\,i=1,2.
\end{aligned}
\end{equation}
Here $S$ is a countable set that is thought of as the site space. (In fact, Dawson and Perkins made the explicit choice $S=\ZZ^d$.) The matrix $\CA$ is defined by
$$\CA(k,l)=a(k,l)-\1_{\{k=l\}},$$
where $a$ is a symmetric transition matrix of a Markov chain on $S$. Finally,  $(W_i(k),\; k\in S,\,i=1,2)$ is an independent family of one-dimensional Brownian motions. Dawson and Perkins studied the long-time behavior of this
model and also constructed the analogous model in the continuous setting on $\R$ instead of $S$.
One can think of $\gamma$ as being the branching rate for this model.

In this paper we study (under weaker assumptions on the matrix $\CA$) a model that formally corresponds to the case $\gamma=\infty$. This infinite rate mutually catalytic branching process can be characterized by a certain martingale problem. We show that this martingale problem is well-posed and its solution $X$ is the unique solution
of a system of stochastic differential equations driven by a certain Poisson noise. In fact, we construct the solution via approximate solutions of this system of SDEs. Furthermore, we show that $X$ is the limit of the Dawson-Perkins processes as $\gamma\to\infty$. Hence, we call $X$ the \emph{infinite rate mutually catalytic branching process} (IMUB).

This is the second part in a series of three papers. In the first part \cite{KM1}, we studied the infinite rate mutually catalytic branching process in the case where $S$ is a singleton. In the third part \cite{KM3}, we investigated the longtime behaviour for the case where $S$ is countable. There we establish a dichotomy between segregation and coexistence of types depending on the potential properties of the migration mechanism $\CA$.

An alternative construction of the infinite rate mutually catalytic branching process via a Trotter type approximation scheme can be found \cite{Oeler2008} and \cite{KO}. We remark that although the approach in \cite{KO} is more easily accessible, it yields less information about the IMUB process than the approach made here. In particular, the investigation of the longtime behaviour in \cite{KM3} needs the description of the jumps of the process that we develop in this paper.

\subsection{Notation}
\label{S1.2}
We have to introduce some notation.
Let $\CA=(\CA(k,l))_{k,l\in S}$ be a matrix on $S$ satisfying the following assumptions:
\begin{equation}
\label{E1.2}
\CA(k,l)\geq 0\mf k\not= l
\end{equation}
and
\begin{equation}
\label{E1.3}
\|\CA\|:=\sup_{k \in S}\sum_{l\in S} |\CA(k,l)| + |\CA(l,k)| <\infty.
\end{equation}
Let
\begin{equation}
\label{E1.4}
E=[0,\infty)^2\setminus (0,\infty)^2.
\end{equation}

For $u,v\in [0,\infty)^S$  define
$$\langle u,v\rangle=\sum_{k\in S}u(k)v(k)\,\in[0,\infty].$$
Similarly, for $x\in([0,\infty)^2)^S$ and $\zeta\in[0,\infty)^S$ define
$$\langle x,\zeta\rangle =\sum_{k\in S}x(k)\zeta(k)\,\in[0,\infty]^2.
$$

By Lemma~IX.1.6 of \cite{Liggett1985}, there exists a $\beta\in(0,\infty)^S$ and an $\LSC\geq1$ such that
\begin{equation}
\label{E1.5}
\sum_{k\in S}\beta(k)< \infty
\end{equation}
and
\begin{equation}
\label{E1.6}
\sum_{l\in S}\beta(l)(|\CA(k,l)|+|\CA(l,k)|)\leq \LSC\beta(k)\mfa k\in S.
\end{equation}

We fix this $\beta$ for the rest of this paper.
Note that for the transpose matrix $\CA^*$ of $\CA$, we have $\|\CA^*\|=\|\CA\|<\infty$ and \equ{E1.6} holds with the same $\beta$. Hence, in what follows, $\CA$ could be replaced by $\CA^*$. We will make use of this fact in Section~\ref{S4} when we construct a dual process.

Let us define the Liggett-Spitzer spaces as follows:
$$\begin{aligned}
\LB&=\big\{ u\in [0,\infty)^S:\;\langle u,\beta\rangle<\infty\big\},\\
\LBT&=\big\{ x\in \big([0,\infty)^2\big)^S:\;\langle x,\beta\rangle\in[0,\infty)^2\big\},\\
\LBE&=\LBT\cap E^S.
\end{aligned}
$$
For $u\in\R^S$, let
\begin{equation}
\label{E1.7}
\left\Vert u\right\Vert_{\beta} =\sum_{k\in S} |u(k)|\beta(k).
\end{equation}
Furthermore, for $x=(x_1,x_2)\in\LBT$, let $\Vert x\Vert_{\beta,2}=\Vert x_1\Vert_\beta+\Vert x_2\Vert_\beta$.
Note that $\Vert\ARG\Vert_\beta$ defines a topology on $\LB$. Furthermore,
$\Vert\ARG\Vert_{\beta,2}$ defines a topology on $\LBT$ and on $\LBE$. We will henceforth assume that these spaces are equipped with these topologies.

Let $\CA f(k)=\sum_{l\in S}\CA(k,l)f(l)$ if the sum is well defined. Let $\CA^n$ denote the $n$th matrix power of $\CA$ (note that this is well defined and finite by \equ{E1.3}) and define
$$p_t(k,l):=e^{t\CA}(k,l):=\sum_{n=0}^\infty \frac{t^n\,\CA^n(k,l)}{n!}.$$
Let $\cS$ denote the (not necessarily Markov) semigroup generated by $\CA$, that is,
$$
\cS_t f(k)= \sum_{l\in S} p_t(k,l)f(l)\mf t\geq 0.
$$
We will use the notation $\CA f$, $\cS_tf$ and so on also for $[0,\infty)^2$ valued functions $f$ with the obvious meaning.

Note that for $f\in \LL^\beta$, the expressions $\CA f$ and $\cS_t f$ are well defined and that (recall $\LSC$ from \equ{E1.6})
\begin{equation}
\label{E1.8}
\Vert\CA f\Vert_\beta \leq \LSC\Vert f\Vert_\beta
\quad\mbox{and}\quad
\Vert\cS_t f\Vert_\beta \leq e^{\LSC t}\Vert f\Vert_\beta.
\end{equation}

Let $\bA(k,l)=\CA(k,l)^+$.
Denote by $(\bS_t)_{t\geq 0}$ the semigroup generated by $\bA$, that is, $\bS_t=\sum_{n=0}^\infty e^{-t}\bA^n/n!$.
Clearly, for any $f\in \LB$ and $k\in S$, we have
$$
\CA f(k)\leq \bA f(k),\qquad
\cS_t f(k)\leq \bS_t f(k)
\mbu
f(k)\leq \bS_t f(k).
$$
As above, it is easy to check that
\begin{equation}
\label{E1.9}
\left\Vert \bA f\right\Vert_{\beta}\leq \LSC\left\Vert f\right\Vert_{\beta}\mbu
\left\Vert \bS_tf\right\Vert_{\beta}\leq e^{\LSC t}\left\Vert f\right\Vert_{\beta}\mfa t\geq 0.
\end{equation}
Therefore, we trivially have
\begin{align}
\label{E1.10}
\bA f(k)&\leq \frac{\LSC\left\Vert f\right\Vert_{\beta}}{\beta(k)}\mfa f\in \LB,\\
\label{E1.11}
\bS_tf(k)&\leq \frac{e^{\LSC t}\left\Vert f\right\Vert_{\beta}}{\beta(k)}\mfa  f\in \LB,
 \;t\geq 0.
\end{align}
All the estimates \equ{E1.8}--\equ{E1.11} also hold for the transposed matrix $\CA^*$ and the derived objects $\bA^*$, $\bS^*$ and so on.

Let $D_{\LBE}=D_{\LBE}[0,\infty)$ be the Skorohod space of c{\`a}dl{\`a}g $\LBE$-valued functions.

We will employ a martingale problem in order to characterize the (bivariate) process $X\in D_{\LBE}$ that will be
the limit of the Dawson-Perkins models as $\gamma\to\infty$.
In order to formulate this martingale problem for $X$, we need some more notation. For $x=(x_1,x_2)$ and $y=(y_1,y_2)\in\R^2$ we introduce the
\emph{lozenge product}
$$
x \mtimes{} y\;:=\;-(x_1+x_2)(y_1+y_2)\;+\;i(x_1-x_2)(y_1-y_2)
$$
(with $i=\sqrt{-1}$) and define
$$
F(x,\,y)=\exp(x\mtimes y).
$$
Note that the lozenge product defines a symmetric bilinear form, in particular $x\mtimes y=y\mtimes x$. Some more properties of $F$ and the lozenge product can be found in \cite[Lemma 2.2, Corollaries 2.3, 2.4]{KM1}.
For $x,y\in(\R^2)^S$, we write
$$
\lb x,\,y\rb\,\;=\;\sum_{k\in S}x(k)\mtimes y(k)
$$
whenever the infinite sum is well defined
and let
\begin{equation}
\label{E1.12}
H(x,\,y)=\exp(\lb x,\,y\rb).
\end{equation}

Define
\begin{equation}
\label{E1.13}
\LFT=\big\{ y\in ([0,\infty)^2)^S:\,y (k)\neq0\mbs{for only finitely many}k\in S\big\}
\end{equation}
and
\begin{equation}
\label{E1.14}
\LFE=\LFT\cap E^S.
\end{equation}
Finally, define the spaces
\begin{equation}
\label{E1.15}
\begin{aligned}
\LIB&=\left\{f\in [0,\infty)^S:\;\langle f,g\rangle<\infty\mfa g\in \LB\right\}\\
&=\Big\{f\in\LB:\sup_{k\in S}f(k)/\beta(k)<\infty\Big\}
\end{aligned}
\end{equation}
and
$$
\LIBE=\left\{ \eta=(\eta_1,\eta_2)\in E^S: \eta_1, \eta_2\in \LIB\right\}.
$$
As a subspace, $\LIB$ inherits the norm of $\LB$.

Note that the function $H(x,y)$ is well defined if either $x\in (\R^2)^S$ and $y\in\LFE$ or $x\in\LBE$ and $y\in\LIBE$.
\subsection{Main Results}
\label{S1.3}
\subsubsection*{Martingale Problem}
Our main theorem is the following.
\begin{theorem}
\label{T1.1}
\textbf{(a)}
For all $x\in \LBE$, there exists a unique solution $X\in D_{\LBE}$ of the following martingale problem:
For each $y\in\LFE$, the process $M^{x,y}$ defined by
\[
\label{MP1}
M^{x,y}_t:=H(X_t,y) -H(x,y)-\int_0^t \lb\CA X_s,y\rb\, H(X_s,y)\,ds\tag{MP$_1$}
\]
is a martingale with $M^{x,y}_0=0$.\smallskip

\textbf{(b)} For any $x\in\LBE$ and $y\in\LIBE$, the process $M^{x,y}$ is well defined and is a martingale.\smallskip

\textbf{(c)} Denote by $P_x$ the distribution of $X$ with $X_0=x$. Then $(P_x)_{x\in\LBE}$ is a strong Markov family.
\end{theorem}
\subsubsection*{Stochastic integral equation}
Unfortunately, the characterization of $X$ as the solution of the martingale
problem \equ{MP1} does not shed much light on properties of the process $X$ such as: Is $X$ continuous or discontinuous? If it is discontinuous, what is the structure of jump formation?

These questions will be answered by a different representation of $X$ as
as a solution to a system of stochastic differential equations of jump type.
We will see that the coordinate processes of $X$ are so-called purely discontinuous martingales and we will give a precise quantitative statement about the distribution of jumps.

Before we give the exact description, let us briefly and roughly recall the concept of stochastic integrals with respect to Poisson point measures. Let $\nu$ be a finite measure on some Borel space $F$ (which will be taken to be $[0,\infty)\times E$ later) and assume that $N$ is a Poisson point process on $[0,\infty)\times F$ with intensity measure $N':=\lambda\otimes\nu$ (here $\lambda$ denotes the Lebesgue measure). Furthermore, let $(Z_t)_{t\geq0}$ be an $\R^F$-valued predictable process (with respect to the filtration $\sigma(N([0,t]\times\ARG))_{t\geq0}$). Then define the integral
$$(Z\ast N)_t(\omega):=\int_0^t\int_F Z_s(\omega;x)N\big(\omega;ds\otimes dx\big)
=\sum_{s\leq t, x\in F}Z_s(\omega;x)N\big(\omega;\{s\}\times\{x\}\big).$$
Note that the sum is finite since the intensity measure $\nu$ is finite. Now, define the so-called martingale measure $M:=N-N'$. Then
$$(Z\ast M)_t:=(Z\ast N)_t-\int_0^t\int_FZ_s(\omega;x)\,N'(ds\otimes dx)$$
is well defined for almost all $\omega$ if, for example,
\begin{equation}
\label{E1.16}
\E\left[\int_0^t\int_F|Z_s(x)|\,N'(ds\otimes dx)\right]<\infty.
\end{equation}
In this case, $Z\ast M$ is an integrable process and is, in fact, a martingale (here we use that $Z$ is predictable). Now, by some $L^1$-approximation procedure, the assumption that $\nu$ be finite could be weakened to $\sigma$-finiteness if condition \equ{E1.16} is fulfilled. In this case, both
$Z\ast N$ and $Z\ast N'$ are well defined. However, using an $L^2$-approximation scheme (similarly as for the construction of infinitely divisible random variables with general L{\'e}vy measure), we can define $Z\ast M$ even if only
\begin{equation}
\label{E1.17}
\E\left[\left(\int_0^t\int_FZ_s(x)^2\,N(ds\otimes dx)\right)^{1/2}\right]<\infty\mfa t\geq0.
\end{equation}
In this case, $Z\ast M$ is still a local $L^2$-martingale. It is purely discontinuous in the sense that it is orthogonal to all continuous local martingales. This general construction of stochastic integrals with respect to integer valued martingale measures is performed in full generality, for example, in \cite[Section II, 1d]{bib:jacshir87}.

The process that we will construct does not have second moments but we will show that it has $p$th moments of all orders $p\in[1,2)$. Hence, checking \equ{E1.17} is a bit tricky. Now,
for $p<2$, we have the simple estimate $(\sum a_i^2)^{1/2}\leq (\sum |a_i|^p)^{1/p}$ for $a_i\in\R$. Hence, using Jensen's inequality, as pointed out in the proof of \cite[Lemma 3.1]{bib:LeGallMytnik},
it is enough to show
that for some $p\in(1,2)$, we have
\begin{equation}
\label{E1.18}
\E\left[\int_0^t\int_F|Z_s(x)|^p\,N'(ds\otimes dx)\right]<\infty\mfa t\geq0.
\end{equation}
In fact, following the proof of \cite[Lemma 3.1]{bib:LeGallMytnik} (see also \cite[Proposition I.1.47(c)]{bib:jacshir87}), one readily gets that if condition \equ{E1.18} holds for all $p\in(1,2)$, then $Z\ast M$ is an $L^p$-martingale for any $p\in[1,2)$.

Now, our aim is to define the process $X$ such that the coordinate processes solve a system of stochastic integral equations where $F=[0,\infty)\times E$.

The first step is, of course, to describe the intensity measure on $F$. Then
we formulate the stochastic integral equation and state in a theorem that it has a unique (weak) solution. The construction of the solution will be performed by an approximation scheme with finite intensity measures and finite site spaces. Uniqueness will be shown using a self-duality of the solution.

The stochastic parts of the single coordinates in the Dawson-Perkins process
defined in \equ{E1.1} are two-dimensional isotropic diffusions and are hence
time-transformed planar Brownian motions. When we speed up these motions, at any positive time, they
will be close to their absorbing points at $E$. Hence, a crucial role in the
subsequent considerations will be played by the harmonic measure $Q$ of planar Brownian
motion $B$ on $(0,\infty)^2$. That is, if $B=(B_1,B_2)$ is a Brownian
motion in $\R^2$ started at $x\in[0,\infty)^2$ and
$\tau=\inf\{t>0: \,B_t\not\in(0,\infty)^2\}$, then we define
$$
Q_x=\P_x[B_\tau \in \ARG].
$$
\begin{lemma}\label{L1.2}
If $x=(u,v)\in(0,\infty)^2$, then the harmonic measure $Q_x$ has a one-dimensional Lebesgue density on $E$
that is given by\begin{equation}
\label{E1.19}
Q_{(u,v)}\big(d(\bar u,\bar v)\big)=
\cases{\DS
\frac4\pi
\,\frac{\TS uv\,\bar u}{\textstyle 4u^2v^2+\big(\bar u^2+ v^2-u^2\big)^2}\;d\bar u,
&\quad\mbox{if }\bar v=0,\\[6mm]
\DS \frac4\pi
\,\frac{\TS uv\,\bar v}
{\TS 4u^2v^2+\big(\bar v^2+u^2-v^2\big)^2}\;d\bar v,
&\quad\mbox{if }\bar u=0.
}
\end{equation}
Furthermore, trivially we have
$Q_x=\delta_x$ if $x\in E$.
\end{lemma}

Formula \equ{E1.19} appears in the remark on page 1094 of~\cite{bib:dp98} and could be derived by recalling that the Cauchy distribution is the harmonic measure for planar Brownian motion on the upper half plane and then applying the conformal map $z\mapsto \sqrt{z}$ (identifying $\R^2$ with $\C$) that maps the half plane to the quadrant.
A more formal proof of this lemma is deferred to the appendix.

As the next goal is to define a measure for the jumps that drive the process $X$, we need to describe the infinitesimal dynamics of $X$. These will be defined in terms of the $\sigma$-finite measure $\nu$
on $E$ that arises as the vague limit (on $E\setminus\{(1,0)\}$) of $\ve^{-1}Q_{(1,\ve)}$ as $\ve\to0$. Using \equ{E1.19}, it is easy to see that $\nu$ has a one-dimensional Lebesgue density given by
\begin{equation}
\label{E1.20}
\nu\big(d(u,v)\big)=\cases{\DS
\frac4\pi\,\frac{u}{(1-u)^2\,(1+u)^2}\;du,&\quad\mbox{if
}v=0,\\[4mm]\DS
\frac4\pi\,\frac{v}{\big(1+v^2\big)^2}\;dv,&\quad\mbox{if }u=0.
}
\end{equation}

We use $\nu$ to define the Poisson point process (PPP)  that will be the driving force of the equations.
Let $\CN$ be the PPP on $S\times \R_+\times\R_+\times  E$ with intensity
\begin{equation}
\label{E1.21}
\CN' = \ell_S \otimes \lambda \otimes \lambda \otimes \nu,
\end{equation}
where $\lambda$ is the Lebesgue measure on $\R_+$ and $\ell_S$ is the counting measure on $S$. The first $\R_+$ is used as time set while the second $\R_+$ is used to model the (predictable) intensity $I(X_{t-};k)$ at which jumps at site $k\in S$ come depending on the current state $X_{t-}$. Now assume that $\FF=(\cF_t)_{t\geq0}$ is a filtration that fulfills the usual hypotheses and is such that
$$
(\CN-\CN')\big(\{k\}\times [0,t]\times A\times B\big)_{t\geq0}
$$
is an $\FF$-martingale for all $k\in S$ and measurable $A\subset\R$ and $B\subset E$ with $\lambda(A)\nu(B)<\infty$.
Finally, define the $\FF$-martingale measure
\begin{equation}
\label{E1.22}
\CM:=\CN-\CN'.
\end{equation}

The measure $\nu$ is the limit of the $Q$ only at the point $(1,0)\in E$. The limits $\nu_{(u,0)}$ of $\ve^{-1}Q_{(u,\ve)}$ and $\nu_{(0,v)}$ of $\ve^{-1}Q_{(\ve,v)}$ can be obtained by simple transformations of $\nu$ (see \cite[discussion before (5.5)]{KM1}): For suitable $f:E\to\R$, we have
$$\int_E f(y)\,\nu_{(u,0)}(dy)\;=\;\frac{1}{u}\int_E f(u(y_1,y_2))\,\nu(dy)$$
and
$$\int_E f(y)\,\nu_{(0,v)}(dy)
\; =\;\frac{1}{v}\int_E f\big(v(y_2,y_1)\big)\,\nu(dy).$$

Hence, if we define the functions
\begin{equation}
\label{E1.23}
J_i(y,z)=y_2z_{3-i}+(y_1-1)z_i\mf y,z\in E,\,i=1,2
\end{equation}
and
$$J=(J_1,J_2),$$
then for $z\in E$, we get
\begin{equation}
\label{E1.24}
(z_1+z_2)\int_E f(y')\,\nu_z(dy')\;=\;\int_Ef\big(z+J(y,z)\big)\,\nu(dy).
\end{equation}
This motivates the following definitions.
Define the functions $I_1$, $I_2$ and $I:=I_1+I_2$ that will serve as intensities for the driving noise by
\begin{equation}
\label{E1.25}
I_{i}(x;k):=\1_{\{x_{3-i}(k)>0\}}\frac{\CA x_{i}(k)}{x_{3-i}(k)}\mf x\in\LBE,\,k\in S,\;t\geq 0,\;i=1,2.
\end{equation}

Let $x\in\LBE$. A pair $(\CN,X)$ is called a weak solution of the following system of stochastic integral equations (for $t\geq0$, $k\in S$, $i=1,2$)
\begin{equation}
\label{E1.26}
 X_{i,t}(k)=x_i(k)+\int_{0}^{t}\CA X_{i,s}(k)\,ds +
\int_{0}^{t}\int_{[0,\infty)\times E}J_i\big(y,X_{s-}(k)\big)\1_{[0,I(X_{s-};k)]}(a)\,\CM\big(\{k\},ds,d(a,y)\big)
\end{equation}
if $\CN$ is a PPP described in \equ{E1.21} and $X$ is an $\FF$-adapted $D_{\LBE}$ valued process such that \equ{E1.26} holds for all $t\geq0$ and $k\in S$.  (Note that $\R_+\times E$ plays the r{\^o}le of $F$ in the considerations around \equ{E1.16} and that $(a,y)$ plays the r{\^o}le of $x$ there.) We say that the solution is unique if the distribution of $X$ is the same for all weak solutions.

In order to grasp the intuitive meaning of \equ{E1.26}, first consider the case $X_{i,s-}(k)>0$. Then Poisson points $y\in E$ come at the rate $(\CA X_{3-i,s-}(k)/X_{i,s-}(k))\nu(dy)$. The point $y$ is turned into a jump $X_s(k)-X_{s-}(k)$ of size $J(y,X_{s-}(k))$.  According to \equ{E1.24} this means that jumps from $X_{s-}(k)$ to some $y'\in E$ come at a rate $\CA X_{3-i,s-}(k)\,\nu_{X_{s-}(k)}(dy')$ as desired. A similar reasoning holds for the case $X_{3-i,s-}(k)>0$.

Note that the stochastic integral  in \equ{E1.26}
\begin{equation}
\label{E1.27}
\MM_{i,t}(k):= \int_{0}^{t}\int_{[0,\infty)\times E}J_i\big(y,X_{s-}(k)\big)\1_{[0,I(X_{s-};k)]}(a)\,\CM\big(\{k\},ds,d(a,y)\big)
\end{equation}
does not make sense as a Lebesgue-Stieltjes integral but is understood in the sense explained around \equ{E1.17} with $x$ replaced by $(a,y)$ and $Z_s(\omega;x)$ replaced by $J\big(y,X_{s-}(k)(\omega)\big)\1_{[0,I_k(X_{s-}(\omega))]}(a)$. Furthermore, note that the left limit $X_{s-}$ is used so as to make the integrand predictable.
In order that the integral be well defined, with a view to \equ{E1.18}, it is enough to check that for some $p\in(1,2)$, we have
\begin{equation}
\label{E1.28}
\E\left[\int_0^t\int_E\big|J_i\big(y,X_{s-}(k)\big)\big|^pI(X_{s-};k)\,ds\,\nu(dy)\right]<\infty.
\end{equation}

In fact, if condition \equ{E1.28} holds for all $p\in(1,2)$, then $(\MM_{i,t}(k))_{t\geq0}$, $i=1,2$, $k\in S$, is an $L^p$-martingale for any $p\in[1,2)$.
We will show below in Remark~\ref{R3.6} that \equ{E1.28} indeed holds for all $p\in(1,2)$.

Note that the whole business of defining $J$ is used in order to define the dynamics of $X$ in terms of only one source of noise that produces ``standard jumps''. If $\CM(\{k\},\ARG,[0,I_k(X_{s-})]\times\ARG)$ has an atom at $\{s\}\times\{y\}$, then the actual jump of $X(k)$ at time $s$ is from $X_{s-}(k)$ to
$$X_{s}(k)= X_{s-}(k)+J(y,X_{s-}(k))
=\cases{
(y_1,y_2)\,X_{1,s-}(k),\mfalls X_{1,s-}(k)>0,\\[2mm]
(y_2,y_1)\,X_{2,s-}(k),\mfalls X_{2,s-}(k)>0.}
$$
There are two types of jumps. If $y_2>0$, then the coordinate $X_{s-}(k)$ changes its type. Since $\nu(\{y:\,y_2>0\})=2/\pi<\infty$ (see Lemma~\ref{LA.1}), the changes of type come at a finite rate as long as $X_s(k)$ is bounded away from $0$.
On the other hand, if $y_1>0$, then the jumps change the size of the population at site $k$ by a \emph{factor} of $y_1$, but not its type.   From the definition of the measure $\nu$ the jumps for which $|y_1-1|$ is small come at an infinite rate.
\begin{theorem}
\label{T1.3}
For any $x\in\LBE$, there exists a unique weak solution $(\CN,X)$ of  \equ{E1.26} and $X$ solves \equ{MP1}.
\end{theorem}

The construction of the solution of \equ{E1.26} requires a lot of effort, including an involved approximation scheme. If we were interested only in the existence of solutions of the martingale problem \equ{MP1}, we could follow an easier route by using the Trotter product approach as performed in \cite{KO}.

It is natural to ask whether the coordinate processes of the solution of \equ{E1.26} ever hit the point $(0,0)$. We conjecture that this is not the case. In fact, for a similar model, with $S$ being a singleton, this is proved in \cite[Thm.{} 1.7]{KM1}.

\subsubsection*{Convergence as the rates go to infinity}
Now let us go back to the Dawson-Perkins model. We would like to clarify our initial motivation that
the process described in Theorems~\ref{T1.1} and \ref{T1.3} is indeed the limit of Dawson-Perkins
process as $\gamma\rightarrow \infty$. Let $Y^{\gamma}=(Y^{\gamma}_1, Y^{\gamma}_2)$ be a solution of \equ{E1.1} with $Y_0\in \LBE$.  This
process with our slightly relaxed assumptions on $\CA$ can be constructed in a way similar to the construction of Dawson and Perkins (see also~\cite{bib:cdg04}). Furthermore, let $X$ be a solution of \equ{MP1} with $X_0=Y_0$.

Clearly, the continuous processes $Y^\gamma$ cannot converge to the
discontinuous process $X$ in the Skorohod topology on $D_{\LBE}$. Hence, in order
to get a limit theorem, we use the weaker Meyer-Zheng ``pseudo-path topology'' (see
\cite{bib:mz84}). Roughly speaking, convergence in the ``pseudo-path topology'' means
convergence for Lebesgue almost all time points. More precisely, for any $f\in
D_{\LBT}$ let $\psi(f)$ denote the image measure on $[0,\infty)\times\LBT$
of $e^{-t}dt$ under the map $t\mapsto(t,f(t))$. Note that $\psi$ is injective
and hence weak convergence in the space of probability measures on
$[0,\infty)\times \LBT$ defines a notion of convergence on $D_{\LBT}$ that is called the
``pseudo-path topology'' by Meyer and Zheng \cite{bib:mz84}.

For the convergence of $Y^\gamma$ to $X$, it is not crucial that in \equ{E1.1} the noise term has the special form of a product. In fact, it is only necessary that the noise is isotropic, strictly positive in $(0,\infty)^2$ and vanishing at the boundary in such a way that it admits a solution with each coordinate nonnegative. Hence, consider the equation
\begin{equation}
\label{E1.29}
\begin{aligned}
Y_{i,t}(k)\;&=\;Y_{i,0}(k)+\int_0^t \sum_{l\in S} \CA(k,l) Y_{i,s}(l)\,ds +\int_0^t \gamma^{1/2}\,\sigma( Y_{s}(k))\,dW_{i,s}(k),\;\;t\geq 0,\; k\in S,\,i=1,2.
\end{aligned}
\end{equation}
Here $(W_i(k),\; k\in S,\,i=1,2)$ is an independent family of one-dimensional Brownian motions and
$\sigma:[0,\infty)^2\to[0,\infty)$ is measurable and fulfils the following assumptions:
\begin{assumption}
\label{A1.4}
\begin{itemize}
\item[(i)] $\sigma(x)=0$ for all $x\in E$.
\item[(ii)] $\inf\sigma(C)>0$ for any compact $C\subset(0,\infty)^2$.
\item[(iii)]
For each $y\in\LBT$ and $\gamma>0$, \equ{E1.29} admits a (weak) $\LBT$-valued solution.
\end{itemize}
\end{assumption}
Of course, $\sigma(x)=\sqrt{x_1x_2}$ is the case considered in \equ{E1.1} and it satisfies the above assumptions.

\begin{theorem}
\label{T1.5}
Assume that (i) and (ii) hold and that for each $\gamma>0$, we have chosen an $\LBT$-valued solution $Y^{\gamma}$ of \equ{E1.29}. Assume that $X_0:=Y_0^{\gamma}\in\LBE$ does not depend on $\gamma$.
Then, for each sequence $\gamma_n\to\infty$, in $D_{\LBT}$ equipped with the Meyer-Zheng pseudo-path topology, we have the convergence in law
$$
Y^{\gamma_n} \Longrightarrow X\quad\mbsr{as}n\to\infty.
$$
\end{theorem}

\subsection{Organization of the paper}
\label{S1.4}
We prove the existence parts of Theorems~\ref{T1.1} and \ref{T1.3} via an approximation procedure. In Section~\ref{S2}, we will construct
a family $(\appX)_{m\in\N,\,\ve>0}$ of processes which
\begin{itemize}
\item
live on finite site spaces $S_m\subset S$,
\item
have a finite jump measure $\nu^\ve$ instead $\nu$ by suppressing certain small jumps, and
\item
where the intensities of the driving noise (see \equ{E1.25}) are truncated for small values $x_i(k)\in(0,\ve)$.
\end{itemize}
In that section, we further derive moment estimates for the truncated measures and processes.

In Section~\ref{S3} we show that the sequence
$(\appX)_{m\in\N,\,\ve>0}$ is tight and that any (weak) limit point solves the martingale problem \equ{MP1}. Sections~\ref{S2} and \ref{S3} are the corner stone for the existence part in Theorem~\ref{T1.3}.
In Section~\ref{S4}, we will show uniqueness for \equ{MP1}. Section~\ref{S5} is devoted to the proof of
Theorem~\ref{T1.3} based on the Section~\ref{S3} and \ref{S4}. Theorem~\ref{T1.5} is proved in Section~\ref{S6}.

\section{Approximating processes and moment bounds}
\label{S2}
\setcounter{equation}{0}
\setcounter{theorem}{0}

The aim of this section is to construct
the family of
approximating processes $(\appX)$, that was announced in Section~\ref{S1.4}. To this end define a sequence $(S_m)_{m\in\N}$ of \emph{finite} subsets of $S$ such that $S_m\uparrow S$
as $m\rightarrow \infty$. The process $\appX$ formally lives on $S$ but we keep fixed all coordinates in $S\setminus S_m$.

\subsection{Definition of the approximating processes}
\label{S2.1}
We will define a family of approximating processes
\[\appX=\big((\appX_{1,t}(k), \appX_{2,t}(k))\in E,\; k\in S,\, t\geq 0\big)\]
in a way that they may change values only for $k\in S_m$ and stay constant for $k\in S\setminus S_m$.
To this end let us
define the matrix $\apmCA$ by
$$\apmCA(k,l)=\cases{
  \CA(k,l),&\mfalls k,l\in S_m,\\
  0,&\msonst.
}
$$
Let $(\apmCS_t)_{t\geq 0}$ be the semigroup generated by $\apmCA$ and let $\apmp_t=e^{t\apmCA}$ denote its kernel, that is, for $f\in\LB$
$$
\apmCS_t f(k)= \sum_{l\in S} \apmp_t(k,l)f(l).
$$
Define $\apmA(k,l)=\apmCA(k,l)^+$ and let $(\apmbS_t)_{t\geq0}$ be the semigroup generated by $\apmA$.
Clearly, for any $f\in \LB$,
\begin{equation}
\label{E2.1}
\begin{aligned}
\apmCA f(k)&\;\leq\; \apmA f(k)\;\leq\; \bA f(k)\mfa k\in S, \\[1mm]
\apmCS_t f(k)&\;\leq\; \apmbS_t f(k)\;\leq\; \bS_t f(k)\mfa k\in S.
\end{aligned}
\end{equation}

We denote by $a\vee b:=\max(a,b)$ the maximum and by $a\wedge b:=\min(a,b)$ the minimum of two numbers. Fix $\ep\in(0,1)$ and $m\in\N$ and define the modified jump rate (compare \equ{E1.25})
\begin{equation}
\label{E2.2}
\app{I}_i(x;k):=\1_{\{x_{3-i}(k)>0\}}\,\frac{\apmCA x_{i}(k)}{x_{3-i}(k)\vee\ve}\mf i=1,2,
\end{equation}
and
$$\app I(x;k)=\app I_1(x;k)+\app I_2(x;k).$$
For $y,z\in E$ and $i=1,2$, define (compare \equ{E1.23})
\begin{equation}
\label{E2.3}
\app J_{i}(y,z)=y_2(z_{3-i}\vee\ve)\1_{\{z_{3-i}>0\}}+(y_1-1)z_i
\end{equation}
and
$$\app J=\big(\app J_1,\app J_2\big).$$
Note that these definitions are pretty much in line with the definitions of $I$ and $J$ in \equ{E1.25} and \equ{E1.23}, but here small positive values of $z_i$ are replaced by $\ve$. This handles the problem of increasing jump rates when a coordinate approaches $0$.

Next, we take care of the problem that the jump measure $\nu$ is infinite.
We introduce the (finite) truncated jump measure $\nu^\ve:=\nu\1_{E^\ve}$ where
$$E^\ve:=\big\{y\in E:\,y_1\not\in(1-\ve,1+\ve')\big\},$$
that is,
\begin{equation}
\label{E2.4}
\nu^\ve(dy)=\nu(dy)\1_{\{y_1\not\in(1-\ve,1+\ve')\}}.
\end{equation}
Here, $\ve':=\ve'(\ve)\in[\ve/2,\ve]$ is chosen according to Lemma~\ref{LA.3} such that
$$\int_{E^\ve}(y_1-1)\,\nu(dy)=0.$$
This particular form of the truncated jump measure is helpful since it preserves the expectation of jumps.

Note that
$$
\int_{E}\app J_{i}(y,z)\,\nu(dy)=(z_{3-i}\vee\ve)\,\1_{\{z_{3-i}>0\}},
$$
$$
\hspace*{-1.3mm}
\int_{E}\app J_{i}(y,z)\,\nu^\ve(dy)=(z_{3-i}\vee\ve)\,\1_{\{z_{3-i}>0\}},
$$
and
\begin{equation}
\label{E2.5}
\hspace*{-14mm}\app I(x;k)\,\int_{E}\app J_{i}(y,x)\,\nu^\ve(dy)=\apmCA x_i(k)\,\1_{\{x_{3-i}(k)>0\}}.
\end{equation}

For the approximating process (but not for the limiting process $X$), we have to take special care of the coordinates that assume the value $0$. If for a given coordinate $k$, we have $x(k)=(0,0)$, then the drift $\apmCA x(k)$ would drive the process out of the state space $\LBE$ immediately. (This would happen also to a coordinate with $x_i(k)>0$ if we would impose the deterministic drift $\CA x_{3-i}(k)>0$ instead of the jump process with this compensator.) This shows that we have to replace the deterministic drift by a jump process whose compensator is given by $\apmCA x(k)$. There are several ways to do so, for example, one could use for each $i=1,2$, a Poisson process with jump size $\ve$ and rate $\ve^{-1}\apmCA x_{3-i}(k)$. Here, in order to stick formally with the noise processes $\CN$ defined in \equ{E1.21} (and for no other reason), we define two independent (and independent of $\CN$) noises $\CN^1$ and $\CN^2$ with the same distribution as $\CN$ and let $\CM^i:=\CN
 ^i-\CN'$ denote the compensated jump measure, $i=1,2$. Finally, assume that $(\CF_t)_{t\geq0}$ is the filtration generated by $\CN$, $\CN^1$ and $\CN^2$ and that fulfils the usual conditions.

The intensities of the jumps away from $0$ will be given by
\begin{equation}
\label{E2.6}
\appIN_i(x;k):=\1_{\{x(k)=(0,0)\}}\,\frac{1}{\ve}\apmCA x_{i}(k).
\end{equation}
Note that
\begin{equation}
\label{E2.7}
\int_E\ve y_2\appIN_i(x;k)\,\nu(dy)=\1_{\{x(k)=(0,0)\}}\,\apmCA x_{i}(k).
\end{equation}

Now given a process $\appX$
which is adapted to the filtration
$(\CF_t)_{t\geq0}$, define
$\Delta \appX_s=\appX_s-\appX_{s-}$ the jump of $\appX$ at time $s$.

Now we can define the process $\appX$ as the unique strong solution of the system of equations
\begin{equation}
\label{E2.8}
\begin{aligned}
\appX_{i,t}(k)\,=\,x_i(k)&+\int_{0}^{t}\int_{[0,\infty)\times E^\ve} \app J_{i}\big(y,\appX_{s-}(k)\big)\1_{[0,\app I(\appX_{s-};k)]}(a)\,\CN\big(\{k\},ds,d(a,y)\big)\\
&+\int_0^t\int_{[0,\infty)\times E} \ve y_2\,\1_{[0,\appIN_i(\appX_{s-};k)]}(a)\,\CN^i\big(\{k\},ds,d(a,y)\big)\\
&+\int_0^t\apmCA\appX_{i,s}(k)\,\1_{\{\appX_{i,s}(k)>0\}}\,ds
,\mf k\in S_m,\; i=1,2\end{aligned}
\end{equation}
and
\begin{equation}
\label{E2.9}
\appX_t(k)=x(k)\mf t\geq0\mbs{and}k\in S\setminus S_m.
\end{equation}
Note that the middle term in \equ{E2.8} represents the jumps in the case where $\appX_{s-}(k)=(0,0)$. For nonzero coordinates this term is zero. We will show later that in the limit $\ve\downarrow0$, the compensated version of this term vanishes.

Note that $\app J$ is defined in a way that the jumps indeed do not drive the coordinate processes out of the space $E$; that is, $z+\app J(y,z)\in E$ for all $y,z\in E$. Also note that the middle term in \equ{E2.8} does not drive the coordinates out of $E$ since it is nonzero only if the coordinate takes the value $(0,0)$ and in this case the value of only one type changes by jump.

Since the jumps according to $\nu^\ve$ have a finite mean, the total mass process increases at most exponentially with the number of jumps that occur. Since the jump rate of $\appX$ is bounded by  $\nu(E^\ve)\,\ve^{-1}|S_m|$ times the total mass, in each time interval there are in fact at most finitely many jumps. Hence the solution $\appX$ of \equ{E2.8} and \equ{E2.9} is indeed well defined and is unique.

We want to write the dynamics of $\appX$ as a sum of the ``heat flow'' and the martingale term of compensated jumps. To this end, we define
the martingale
\begin{equation}
\begin{aligned}
\label{E2.10}
\app\MM_{i,t}(k):=&
\int_{0}^{t}\int_{[0,\infty)\times E^\ve} \app J_i\big(y,\appX_{s-}(k)\big)\1_{[0,\app I(\appX_{s-};k)]}(a)\,\CM\big(\{k\},ds,d(a,y)\big)\\
&+\int_0^t\int_{[0,\infty)\times E} \ve y_2\,\1_{[0,\appIN_i(\appX_{s-};k)]}(a)\CM^i\big(\{k\},ds,d(a,y)\big).
\end{aligned}
\end{equation}
Hence, by subtracting and adding the compensator terms in~\equ{E2.8}, we can rewrite~\equ{E2.8} as
\begin{equation}
\label{E2.8a}
\begin{aligned}
\appX_{i,t}(k)\,=\,x_i(k)&+\app\MM_{i,t}(k)\\
&+ \int_{0}^{t}\int_{E^\ve}  \app J_i\big(y,\appX_{s}(k)\big)\,\app I\big(\appX_{s};k\big)\,\nu(dy)\,ds\\
&+\int_{0}^{t}\int_{E}  \ve y_2\,\appIN_i\big(\appX_{s};k\big)\,\nu(dy)\,ds\\
&+\int_0^t\apmCA\appX_{i,s}(k)\,\1_{\{\appX_{i,s}(k)>0\}}\,ds
,\mf k\in S_m,\; i=1,2.\end{aligned}
\end{equation}

Adding the last three terms in \equ{E2.8a}, we get (using \equ{E2.5} and \equ{E2.7} in the first equality)
$$
\begin{aligned}
\,\int_{0}^{t}&\int_{E^\ve}  \app J_i\big(y,\appX_{s}(k)\big)\,\app I\big(\appX_{s};k\big)\,\nu(dy)\,ds\\
&\phantom{-}+\int_{0}^{t}\int_{E}  \ve y_2\,\appIN_i\big(\appX_{s};k\big)\,\nu(dy)\,ds\\
&\phantom{-}+
\int_0^t\apmCA\appX_{i,s}(k)\1_{\{\appX_{i,s}(k)>0\}}\,ds\\
=&\phantom{-}\int_0^t\apmCA\appX_{i,s}(k)
\Big(\1_{\{\appX_{3-i,s}(k)>0\}}+
\1_{\{\appX_{s}(k)=(0,0)\}}
+\1_{\{\appX_{i,s}(k)>0\}}\Big)\,ds\\
=&\;\int_0^t\apmCA\appX_{i,s}(k)\,ds.\end{aligned}
$$

Now, \equ{E2.8a} can be rewritten as
\begin{equation}
\label{E2.11}
\appX_{i,t}(k)=x_i(k)+\int_{0}^{t}\apmCA \appX_{i,s}(k)\,ds\;+ \;\app\MM_{i,t}(k) ,\quad k\in S_m,\; i=1,2.
\end{equation}

\begin{remark}
\label{R2.1}Our way of defining the approximate processes might look a bit special at first glance. However, the more naive idea of truncating the rates $I$ like $\min(I,M)$ for some $M>0$ and then letting $M\to\infty$ along with $\ve\downarrow0$ did not work since this results in an additional drift term in \equ{E2.11} that we could not control.

The other idea that naturally pops up is to suppress the jumps of small size on the absolute scale; that is, suppressing all jumps where $|\Delta X_{t}(k)|>\ve$ for some $\ve$. However also in this case, we could not control the additional error term.
\end{remark}

\subsection{Moment estimates for the approximate processes}
\label{S2.3}
The aim is to let  $\ve\downarrow0$ and $m\to\infty$ and to show that $\appX$ converges to a solution of \equ{E1.26}. To this end, we need moment bounds on $\appX$ that are uniform in $m$ and $\ve$ (Corollary~\ref{C2.3}) and that will be derived by the following martingale decomposition for the product $\appX_{1,t}(k)\appX_{2,t}(k)$.
\begin{lemma}
\label{L2.2}
Let $k_1,k_2\in S$, $k_1\neq k_2$, and let $t>0$. Then
\begin{equation*}
\begin{aligned}
\appX_{i,t}(k_i)=
\apmCS_t
\appX_{i,0}(k_i)
+\sum_{l\in S}\int_0^t \apmp_{t-s}(k_i,l)\,d\app{\MM}_{i,s}(l)
\end{aligned}
\end{equation*}
and
\begin{equation*}
\begin{aligned}
\appX_{1,t}(k_1)&\appX_{2,t}(k_2) =
\apmCS_t
\appX_{1,0}(k_1)
\apmCS_t
\appX_{2,0}(k_2)\\
&-\sum_{l\in S}\int_0^t \apmp_{t-s}(k_1,l)\apmp_{t-s}(k_2,l) \left(
\appX_{1,s}(l)\apmCA \appX_{2,s}(l) + \appX_{2,s}(l)\apmCA \appX_{1,s}(l)\right)\,ds\\[2mm]
&+ \app{M}_t,
\end{aligned}
\end{equation*}
where $\app{M}_t$ is given by
\begin{equation*}
\begin{aligned}
\app{M}_t &= \sum_{\attop{l_1,l_2\in S}{l_1\neq l_2}}\int_0^{t}
 \apmp_{t-s}(k_1,l_1)\apmp_{t-s}(k_2,l_2)\left(\appX_{1,s-}(l_1)\,d\app\MM_{2,s}(l_2)
+\appX_{2,s-}(l_2)\,d\app\MM_{1,s}(l_1)\right).
\end{aligned}
\end{equation*}
\end{lemma}
\paragraph{Proof.}
The first equation is just the mild form of \equ{E2.11}.

The second equality is an easy application of the integration by parts formula and the fact that
 \[\appX_{1,t}(k)\appX_{2,t}(k)=0\mfa t\geq 0,\, k\in S.\eope\]

\begin{corollary}
\label{C2.3}
Let $k_1,k_2\in S$ and $t>0$. Then we have
$$
\E\left[ \appX_{i,t}(k_i)\right]\leq \bS_t\,x_i(k_i)
$$
and
$$
 \E\left[ \appX_{1,t}(k_1)\appX_{2,t}(k_2)\right] \;\leq\; \bS_t\,x_1(k_1) \,\bS_t\,x_2(k_2).$$
\end{corollary}
\begin{Proof}
Consider first the case where $k_1,k_2\in S_m$.
We have
$$
\appX_{1,t}(k_1)\appX_{2,t}(k_2) \leq
\apmCS_t
\appX_{1,0}(k_1)
\apmCS_t
\appX_{2,0}(k_2)+M_t.$$
For $K>0$ define the stopping time $\tau_K:=\inf\{s\geq0:\sum_{l}\sum_{i=1}^2\appX_{s,i}(l)\geq K\}\wedge t$.
A simple stopping argument shows that
$$
\begin{aligned}
\apmCS_{t-\tau_K}X_{1,\tau_K}(k_1)&\,\apmCS_{t-\tau_K}X_{2,\tau_K}(k_2)
\leq
\apmCS_t
\appX_{1,0}(k_1)\,
\apmCS_t\appX_{2,0}(k_2)
\\
&+
\sum_{\attop{l_1,l_2\in S}{l_1\neq l_2}}\int_0^{\tau_K}
 \apmp_{t-s}(k_1,l_1)\apmp_{t-s}(k_2,l_2)\left(\appX_{1,s-}(l_1)\,d\app\MM_{2,s}(l_2)
+\appX_{2,s-}(l_2)\,d\app\MM_{1,s}(l_1)\right).
\end{aligned}
$$

Since now the integrand is bounded and the integrators are martingales, the integral has expectation zero, and we get
$$\E\Big[\apmCS_{t-\tau_K}X_{1,\tau_K}(k_1)\,\apmCS_{t-\tau_K}X_{2,\tau_K}(k_2)\Big]
\;\leq\;
\apmCS_t
\appX_{1,0}(k_1)
\,\apmCS_t\appX_{2,0}(k_2).
$$
Since $\P[\tau_K=t]\To{K\to\infty}1$, the expression in the expectation tends to $\appX_{1,t}(k_1)\appX_{2,t}(k_2)$. Using Fatou's lemma, we conclude
\[\E\big[\appX_{1,t}(k_1)\appX_{2,t}(k_2)\big] \;\leq\;
\apmCS_{t}
\appX_{1,0}(k_1)
\,\apmCS_{t}
\appX_{2,0}(k_2).\eope\]
\end{Proof}

Now we will give the $L^p$-bounds for the martingales $\app\MM_i, i=1,2$.
\begin{lemma}
\label{L2.4}
For any  $p\in (1,2)$, there exists a constant $c_p<\infty$ such that for all $m\in\N$, $\ve>0$, $k\in S$, $T>0$ and $i=1,2$, we have
$$
\E\left[ \sup_{t\leq T} | \app\MM_{i,t}(k)|^p\right]
\leq c_p\int_0^T  \left[ (\bA+1) \bS_s x_1(k)+1\right]
 \left[(\bA+1)\bS_s x_2(k)+1\right]\,ds.
$$
\end{lemma}
\paragraph{Proof.}
First  note that for $k\not\in S_m$, we have $\app\MM_i(k)\equiv0$ and hence the estimate is trivial.

Now let $k\in S_m$. Recall the definitions of $\app \MM$ and $J$ in \equ{E2.10} and \equ{E1.23}, respectively, and note that
$$\begin{aligned}
\E\left[ \sup_{t\leq T} | \app\MM_{i,t}(k)|^p\right]
&\leq
3^{p-1} \E\left[ \sup_{t\leq T} \left|C_t\right|^p\right]
+3^{p-1} \E\left[ \sup_{t\leq T} \left|D_t\right|^p\right]
+3^{p-1} \E\left[ \sup_{t\leq T} \left|E_t\right|^p\right]\\
\nonumber
&=: L_1+L_2+L_3,
\end{aligned}
$$
where
$$\begin{aligned}
C_t&:=\int_{0}^{t}\int_{[0,\infty)\times E^\ve} y_2\big(\appX_{3-i,s-}(k)\vee\ve\big)\1_{[0,\app I_{i}(\appX_{s-};k)]}(a)\,\CM\big(\{k\},ds,d(a,y)\big),\\
D_t&:=\int_{0}^{t}\int_{[0,\infty)\times E^\ve} (y_1-1)\appX_{i,s-}(k)\1_{[0,\app I_{3-i}(\appX_{s-};k)]}(a)\,\CM\big(\{k\},ds,d(a,y)\big),
\end{aligned}
$$
and
$$\hspace*{-22.1mm}E_t:=\int_0^t\int_{[0,\infty)\times E} \ve y_2\,\1_{[0,\appIN_i(\appX_{s-};k)]}(a)\CM^i\big(\{k\},ds,d(a,y)\big)
$$
are martingales of finite variation.
As the point process $\CN$ has no double points, the square variation process of $C$ is
$$[C,C]_t=\int_{0}^{t}\int_{[0,\infty)\times E^\ve} y_2^2\big(\appX_{3-i,s-}(k)\vee\ve\big)^2\1_{[0,\app I_{i}(\appX_{s-};k)]}(a)\,\CN\big(\{k\},ds,d(a,y)\big)
$$
Let $m_{1,p}:=\int_E|y_1-1|^p\,\nu(dy)$ and $m_{2,p}:=\int_Ey_2^p\,\nu(dy)$ denote the $p$-th moments of $\nu$. By Lemma~\ref{LA.5}, we have that both quantities are finite. Hence, by the Burkholder-Gundy-Davis inequality (see, e.g., \cite[Theorem VII.92]{DellacherieMeyer1983.5-8}) we get with $c_p'=3^{p-1}(4p)^p$
\begin{equation}
\label{E2.12}
\begin{aligned}
L_1 &\leq c_p'\,\E\big[[C,C]_t^{p/2}\big]\\
&\leq c_p'\, \E\left[\int_{0}^{t}\int_{[0,\infty)\times E^\ve} y_2^p\big(\appX_{3-i,s-}(k)\vee\ve\big)^p\1_{[0,\app I_{i}(\appX_{s-};k)]}(a)\,\CN\big(\{k\},ds,d(a,y)\big)\right]
\\
&= c_p'\, \E\left[\int_{0}^{t}\int_{[0,\infty)\times E^\ve} y_2^p\big(\appX_{3-i,s-}(k)\vee\ve\big)^p\1_{[0,\app I_{i}(\appX_{s};k)]}(a)\,\CN'\big(\{k\},ds,d(a,y)\big)\right]
\\
&= c_p'\, \E\left[\int_{0}^{t}\int_{E} y_2^p\big(\appX_{3-i,s}(k)\vee\ve\big)^p\app I_{i}(\appX_{s};k)\,\nu(dy)\,ds\right]
\\
&\leq c_p'\,m_{2,p}\, \E\left[\int_0^T \big(\appX_{3-i,s}(k)\vee\ve\big)^{p-1}\1_{\{\appX_{3-i,s}(k)>0\}}\, \apmCA\appX_{i,s}(k) \,ds\right]\\
&\leq c_p'\,m_{2,p}\, \E\left[\int_0^T \big(\appX_{3-i,s}(k)+1\big)\1_{\{\appX_{3-i,s}(k)>0\}} \apmCA\appX_{i,s}(k) \,ds\right]\\
&\leq c_p'\,m_{2,p}\, \int_0^T \big(\bS_s x_{3-i}(k)+1\big) \bA \bS_s x_i(k) \,ds
\end{aligned}
\end{equation}
where the last inequality follows  by Corollary~\ref{C2.3} and \equ{E2.1}.
The last line of \equ{E2.12} is trivially bounded by
\begin{equation}
\label{E2.13}
c_p'\,m_{2,p}\,\int_0^T  \left[ (\bA+1) \bS_s x_1(k)+1\right]
 \left [ (\bA+1)\bS_s x_2(k)+1\right]\,ds.
\end{equation}
Hence we are done for $L_1$.

Similarly, we get for $L_3$ (by taking $\appX_{3-i,s}(k)=0$) that
$$L_3\leq \ve^{p-1}
c_p'\,m_{2,p}\, \int_0^T  \bA \bS_s x_i(k) \,ds.$$

Now we treat the $L_2$ term.
Again, by the Burkholder-Gundy-Davis inequality, we get
\begin{equation}
\label{E2.14}
\begin{aligned}
L_2
&\leq c_p'\, \E\left[ \left(\int_0^T \int_{[0,\infty)\times E^\ve}
(y_1-1)^2\appX_{i,s-}(k)^2\1_{[0,\app I_{3-i}(\appX_{s-};k)]}(a)\,\CN\big(\{k\},ds,d(a,y)\big) \right)^{p/2}\right]\\
&\leq c_p'\, \E\left[ \int_0^T \int_{[0,\infty)\times E}
|y_1-1|^p\,\appX_{i,s-}(k)^p\,\1_{[0,\app I_{3-i}(\appX_{s-};k)]}(a)\,\CN\big(\{k\},ds,d(a,y)\big) \right]\\
& =c_p'\, \E\left[ \int_0^T \int_{E}
|y_1-1|^p\,\appX_{i,s}(k)^p\app I_{3-i}(\appX_{s};k)\,\nu(dy)\,ds \right]\\
&\leq c_p'\,m_{1,p}\, \E\left[\int_0^T \big(\appX_{i,s}(k)\big)^{p-1} \apmCA\appX_{3-i,s}(k) \,ds\right]\\
&\leq c_p'\,m_{1,p}\,\E\left[\int_0^T \big(\appX_{i,s}(k)+1\big) \1_{\{\appX_{i,s}(k)>0\}}\apmCA\appX_{3-i,s}(k) \,ds\right]\\
&\leq c_p'\,m_{1,p}\,\int_0^T \big(\bS_s x_i(k)+1\big) \bA\bS_s x_{3-i}(k) \,ds,
\end{aligned}
\end{equation}
where the last inequality follows  by Corollary~\ref{C2.3} and \equ{E2.1}.
Again, the right hand side of \equ{E2.14}
is trivially bounded by \equ{E2.13}. Now the claim holds with $c_p=c_p'(m_{1,p}+2m_{2,p})$ (which is finite by Lemma~\ref{LA.5}).
\gdm

\begin{remark}
\label{R2.5}
Note that the bound in the above lemma is uniform in $m$ and $\ve$.
\end{remark}
From Lemma~\ref{L2.4}, it is easy to derive the following bound (uniform in $m$) on the moments of  the increments of $\appX_i(k)$.
\begin{lemma}
\label{L2.6}
For any $r_1,r_2\in (0,1]$ such that $1<r_1/r_2<2$, there exists a constant $c=c(r_1,r_2)$ such that for all $T>0$, $k\in S$ and $i=1,2$, we have
$$
\E\left[ \sup_{t\leq T} \left|\appX_{i,t}(k)-x_i(k)\right|^{r_1}\right] \leq c \,\max_{j=1,2}
\left(\int_0^T\prod_{i'=1}^2 \big[(\bA+1) \bS_sx_{i'}(k)+1\big]
\,ds\right)^{r_j}.
$$\end{lemma}
\begin{proof}
For $k\in S\setminus S_m$, the result is trivial. Hence now let $k\in S_m$.
By equation \equ{E2.8} and the triangle inequality, we get
$$
\E\left[\sup_{t\leq T} \left|\appX_{i,t}(k)-x_i(k)\right|^{r_1}\right]
\leq R_1+R_2,
$$
where
$$
R_1=\E\left[
\sup_{t\leq T} \left(\int_{0}^{t} \bA \appX_{i,s}(k)\,ds\right)^{r_1}\right] \mbu
R_2= \E\left[\sup_{t\leq T}\left|\app\MM_{i,t}(k)\right|^{r_1}\right].
$$
As in $R_1$ the integrand is nonnegative, and using Jensen's inequality and \equ{E2.17},
we get
\begin{equation}
\label{E2.15}
R_1\leq  \bigg(\E\bigg[\int_{0}^{T} \bA \apmX_{i,s}(k)\,ds\bigg]\bigg)^{r_1}
\leq \bigg(\int_{0}^{T} \bA \bS_s x_i(k)\,ds\bigg)^{r_1}.
\end{equation}
For $R_2$, by Jensen's inequality and Lemma~\ref{L2.4} (with $p=r_1/r_2$), we get that for some constant $c_{r_1/r_2}<\infty$,
\begin{equation}
\label{E2.16}
R_2\leq \bigg(\E\bigg[\sup_{t\leq T}\left|\ap\MM_{i,t}(k)\right|^{r_1/r_2}\bigg]\bigg)^{r_2}\leq c_{r_1/r_2}\bigg(\int_0^T  \prod_{j=1}^2 \big[(\bA+1) \bS_sx_j(k)+1\big]\,ds\bigg)^{r_2}.
\end{equation}
Combining \equ{E2.15} and \equ{E2.16} gives the claim of this lemma.
\end{proof}

First, by Corollary~\ref{C2.3}, we have
\begin{equation}
\label{E2.17}
\E\left[ \appX_{i,t}(k)\right] \leq  \bS_t\,x_i(k)\mf k\in S, \; i=1,2,
\end{equation}
and hence by \equ{E1.11},
$$
\E\left[\big\langle \appX_{i,t},\beta\big\rangle\right]
\leq e^{\LSC t}\,\langle x_i,\beta\rangle \mf i=1,2.
$$
Now we derive bounds on $\sup_{t\leq T}\big\langle \appX_{i,t},\beta\big\rangle, \;i=1,2$.
\begin{lemma}
\label{L2.7}
Let $\phi$ be a non-negative function on $S$. For any $T,K>0$,
$$
\P\left[\sup_{t\leq T} \left\langle \appX_{i,t}\,,\phi\right\rangle >K\right]
\leq K^{-1}\big\langle \bS_T\,x_i,\phi\big\rangle.
$$
In particular, for  $x\in\LBE$, we have (recall $\beta$ and $\LSC$ from \equ{E1.5})
$$
\P\left[\sup_{t\leq T} \left\langle \appX_{i,t}\,,\beta\right\rangle >K\right]
\leq K^{-1}e^{\LSC T}\|x_i\|_\beta.
$$
\end{lemma}
\paragraph{Proof.} First assume that $\phi$ has finite support.
Recall $\app{\MM}$ from \equ{E2.11}. Define the submartingale
$$
\app{M}_{i,t}(k):=x_i(k)+\app{\MM}_{i,t}(k)+\int_0^t \apmA\appX_{i,s}(k)\,ds
$$
and note that $0\leq \appX_{i,t}(k)\leq \app{M}_{i,t}(k)$.
By Doob's inequality, we get
$$
\P\left[ \sup_{t\leq T}\big\langle \appX_{i,t},\phi\big\rangle >K\right]\;\leq\; \P\left[ \sup_{t\leq T}\big\langle \app{M}_{i,t},\phi\big\rangle >K\right]
\;\leq\;\frac{\E\big[ \big\langle \app{M}_{i,T},\phi\big\rangle\big]}{K}.
$$
By \equ{E2.17}, we get
$$
\E\left[\app{M}_{i,T}(k)\right]\;\leq\; x_i(k)+\int_0^T(\bA\bS_t)x_i(k)\,dt\;=\;\bS_Tx_i(k).
$$
Hence
$$\E\big[ \big\langle \app{M}_{i,T},\phi\big\rangle\big]\,\leq\,\big\langle \bS_Tx_i,\phi\big\rangle,
$$
which finishes the proof for $\phi$ with finite support.
 For general $\phi\in[0,\infty)^S$, the claim follows by monotone convergence.
\gdm

\begin{lemma}
\label{L2.8}
Fix arbitrary $T>0$.
Let $(\tau_m)_{m\in\N}$ be a sequence of stopping times bounded by $T$. Then for any $r_1\in (0,1)$ and $k\in S$, we have
$$
\lim_{\delta\downarrow 0}\sup_{m\in\N,\,\ve>0}  \,\E\left[ \big|\appX_{i,\tau_{m}+\delta}(k)-\appX_{i,\tau_{m}}(k)\big|^{r_1}\right]=0.
$$

\end{lemma}
\paragraph{Proof.}
Without loss of generality, we may assume $\delta\leq1$.
We define the stopping time
\[\sigma_{m,K}=\inf\Big\{t\geq0:\,\big\langle \appX_{1,t}+\appX_{2,t},\,\beta\big\rangle\geq K \Big\} \]
and let
$$
\app{R}_1:=\E\left[ \big|\appX_{i,\tau_{m}+\delta}(k)-\appX_{i,\tau_{m}}(k)\big|^{r_1}\1_{\{\sigma_{m,K}\leq T+1\}}
\right]
$$
and
$$
\app{R}_2:=\E\left[ \big|\appX_{i,\tau_{m}+\delta}(k)-\appX_{i,\tau_{m}}(k)\big|^{r_1}\1_{\{\sigma_{m,K}> T+1\}}
\right].
$$
Let $p>1$ be such that $pr_1\leq 1$ and define $q>1$ by $1/p+1/q=1$. Then by H{\"o}lder's inequality, we have
$$
\begin{aligned}
\app{R}_1&\leq\left( \E\left[ \big|\appX_{i,\tau_{m}+\delta}(k)-\appX_{i,\tau_{m}}(k)\big|^{pr_1}\right]\right)^{1/p}
 \P\left[\sigma_{m,K}\leq T+1
\right]^{1/q}\\
&\leq 2\left( \E\left[\sup_{t\leq T+1} \appX_{i,t}(k)^{pr_1}\right]\right)^{1/p}
\P\left[\sup_{t\leq T+1} \big\langle \appX_{1,t}+\appX_{2,t},\beta\big\rangle\geq K\right]^{1/q}
\end{aligned}
$$
By Lemma~\ref{L2.6}, we have
\[ h(T):=\sup_{m\in\N,\,\ve>0}\,2\left( \E\left[\sup_{t\leq T+1} \appX_{i,t}(k)^{pr_1}\right]\right)^{1/p}
<\infty.\]
Let $\delta_1>0$. By Lemma~\ref{L3.4}, we can choose $K$ sufficiently large such that
\[
\sup_{m\in\N,\,\ve>0}\P\left[\sup_{t\leq T+1} \big\langle \appX_{1,t}+\appX_{2,t},\beta\big\rangle\geq K\right]^{1/q}\leq \frac{\delta_1}{h(T)}.\]
This implies that
\begin{equation}
\label{E2.18}
\sup_{m\in\N,\,\ve>0} \app{R}_1\leq \delta_1\,.
\end{equation}
Now we turn to $\app{R}_2$. Let $r_2\in(r_1/2,r_1)$. By the strong Markov property of $\appX$ and Lemma~\ref{L2.6}, we obtain
\begin{equation}
\label{E2.19}
\begin{aligned}
\E&\left[
\big|\appX_{i,\tau_{m}+\delta}(k)-\appX_{i,\tau_{m}}(k)\big|^{r_1}
\1_{\{\sigma_{m,K}>T+1\}}\right]\\
&\leq
\E\left[
\big|\appX_{i,\tau_{m}+\delta}(k)-\appX_{i,\tau_{m}}(k)\big|^{r_1}\1_{\{\langle \appX_{1,\tau_m}+\appX_{2,\tau_m},\beta\rangle\leq K\}}\right]\\
&=
\E\left[\E_{\appX_{\tau_m}}\big[\big|\appX_{i,\delta}(k)-\appX_{i,0}(k)\big|^{r_1}\big]\,
\1_{\{\langle \appX_{1,\tau_m}+\appX_{2,\tau_m},\beta\rangle\leq K\}}\right]
\\
&\leq c(r_1,r_2) \max_{j=1,2}
\E\bigg[\bigg(\int_0^\delta \prod_{i'=1}^{2} \big[(\bA+1) \bS_s\appX_{i',\tau_{m}}(k)+1\big]\,ds
\bigg)^{r_j}\\
&\mbox{}\hspace*{6cm}\left.\times
\1_{\{\langle \appX_{1,\tau_m}+\appX_{2,\tau_m},\beta\rangle\leq K\}}\right].
\end{aligned}
\end{equation}
Note that on the event $\{\langle \appX_{1,\tau_m}+\appX_{2,\tau_m},\beta\rangle\leq K\}$, by \equ{E1.9}, \equ{E1.9} and \equ{E1.10}, we have
$$\big((\bA+1)\bS_s\appX_{i,\tau_m}\big)(k)\,\leq \,(\LSC+1)\,e^{\LSC s}\,K/\beta(k).$$
Hence for some constant $c(r_1,r_2,\LSC,T)$, the right hand side of \equ{E2.19} is bounded by
$$
\begin{aligned}
 c(r_1,r_2,\LSC,T) \max_{j=1,2}\bigg(\int_0^\delta  \left( \frac{K}{\beta_k}+1\right)^2
\,ds\bigg)^{r_j}\longrightarrow0 \;\;{\rm as}\; \delta\downarrow 0,
\end{aligned}
$$
uniformly in $m$. Together with \equ{E2.18}, this implies
$$
\limsup_{\delta\downarrow 0}\sup_{m\in\N,\,\ve>0}\big(\app{R}_1+ \app{R}_2\big)\leq \delta_1\,.
$$
Since $\delta_1>0$ was arbitrary, the limit is in fact $0$. This finishes the proof.
\gdm

\section{Existence of a  solution to \equ{MP1}}
\label{S3}
\setcounter{equation}{0}
\setcounter{theorem}{0}
Recall the definition of the approximating process $\apmeX$ from \equ{E2.8} and the martingale problem \equ{MP1} from Theorem~\ref{T1.1}. In this section, we show that this process, in fact, converges to a solution of the martingale problem \equ{MP1} if $m\to\infty$, $\ve\downarrow0$. Since the order of limits does not play a role here, we assume that we are given a sequence $\ve_m\downarrow0$ and define $\apmX:=\apv{X}{m,\ve_m}$. Similarly, we define $\apm{I}:=\apv{I}{m,\ve_m}$, $\apm{J}$, $\apm{\MM}$ and so on.

Recall that $D_{\LBE}$ is the Skorohod space of c{\`a}dl{\`a}g functions $[0,\infty)\to\LBE$ equipped with the Skorohod topology.
This and the next section will be devoted to the proof of the following theorem.
\begin{theorem}
\label{T3.1}
Let  $\apmX_0=x\in\LBE$ for all $m\in\N$.
As $m\rightarrow \infty$,
the processes $\apmX$ converge in distribution
in $D_{\LBE}$ to $X$ which is the
unique solution to the martingale problem \equ{MP1} with $X_0=x$.
\end{theorem}
The strategy of the proof is pretty much standard. First we
prove tightness of the sequence of approximate processes and show that
every convergent subsequence satisfies the above martingale problem.
Then (in Section~\ref{S4}) we will show the uniqueness of the solution to the
martingale problem \equ{MP1}.

This section is devoted to the proof of the following proposition which is the first step in the
proof of the above theorem.
\begin{proposition}
\label{P3.2}
Let  $\apmX_0=x\in\LBE$ for all $m\in\N$.
\begin{enumerate}[(i)]
\item
The sequence $(\apmX)_{m\in\N}$ is tight
in $D_{\LBE}$.
\item
Any limit point of $(\apmX)_{m\in\N}$ in $D_{\LBE}$
solves  the martingale problem \equ{MP1}.
\end{enumerate}
\end{proposition}

\subsection{Proof of Proposition~\ref{P3.2}(i): Tightness}
The strategy for showing tightness in Proposition~\ref{P3.2} is to do two things:
\begin{enumerate}[(1)]
\item We show that the so-called compact containment condition holds
for $(\apmX)_{m\in\N}$ (see Lemma~\ref{L3.4}).
\item
Let $\mathrm{Lip_{\,f}}(\LBE;\C)$ denote the space of bounded Lipschitz functions on $\LBE$ that depend on only finitely many coordinates. We use moment estimates for the coordinate processes $\apmX(k)$ and Aldous's criterion to show that for $f\in\mathrm{Lip_{\,f}}(\LBE;\CC)$, the sequence $\big(f(\apmX_t)\big)_{t\geq0}$, $m\in\N$, is tight in $D_\R$ (Lemma~\ref{L3.5}).
\end{enumerate}
By the Stone-Weierstra{\ss} theorem, $\mathrm{Lip_{\,f}}(\LBE;\C)\subset C_b(\LBE;\C)$ is dense in the topology of uniform convergence on compacts. Hence (1) and (2) imply tightness of  $(\apmX)_{m\in\N}$  by Theorem~3.9.1 of \cite{bib:kur86}.

\subsubsection{Compact containment}

\begin{lemma}
\label{L3.3}
A subset $L\subset \LBE$ is relatively compact if and only if
\begin{enumerate}[(i)]
\item
$\sup_{y\in L}\|y_i\|_\beta<\infty$ for $i=1,2$, and
\item
for every $\delta>0$ there exists a finite $F\subset S$ such that
$$\sup_{y\in L}\big\|y_i\1_{S\setminus F}\big\|_\beta<\delta\mf i=1,2.$$
\end{enumerate}
\end{lemma}
\paragraph{Proof.}
Simple. (Note that in the setting of Prohorov's theorem, these two conditions correspond to boundedness of the total masses and to tightness.)
\gdm

\begin{lemma}[Compact containment condition]
\label{L3.4}
For every $T>0$ and every $\delta>0$, there exists a compact set $L_\delta\subset \LBE$ such that for every $m\in \N$
$$\P\big[\apmX_t\in L_\delta\mbs{for all}t\in[0,T]\big]\geq 1-\delta.
$$
\end{lemma}
\paragraph{Proof.}
Fix $T>0$. By Lemma~\ref{L3.3}, it is enough to show the following:
\begin{enumerate}[(i)]
\item
For any $\delta>0$, there exists $K>0$ such that for all $m\in\N$, we have
$$
\P\left[\sup_{t\leq T}\big\langle \apmX_{i,t},\beta\big\rangle >K\right]\leq
\delta,\mf i=1,2.
$$
\item
For any $\delta>0$, there exists a finite set $F\subset S$ such that for all $m\in\N$,
$$
\P\left[\sup_{t\leq T}\big\langle \apmX_{i,t} \1_{F^c},
\beta\big\rangle >\delta\right]\leq
\delta,\mf i=1,2.
$$
\end{enumerate}

While (i) is immediate from Lemma~\ref{L2.7}, for (ii), note that
\[\P\left[\sup_{t\leq T}\big\langle \apmX_{i,t} \1_{F^c},
\beta\big\rangle >\delta\right]
\leq \delta^{-1}\big\langle \bS_T\,x,\beta\1_{F^c}\big\rangle
\downarrow0\mbs{as}F^c\downarrow\emptyset.
\eope\]
\subsubsection{Tightness of coordinate processes}
The next step, which completes the proof of Proposition~\ref{P3.2}(i), is to show the following lemma.

\begin{lemma}
\label{L3.5}
Let $f\in\mathrm{Lip_{\,f}}(\LBE;\CC)$. Then $f\big(\apmX_t\big)_{t\geq0}$, $m\in \N$, is tight in the space $D_\CC$ of c{\`a}dl{\`a}g functions $[0,\infty)\to\CC$ equipped with the Skorohod topology.
\end{lemma}
\paragraph{Proof.}
Let $T>0$, $(\tau_m)_{m\in\N}$ and $r_1\in (0,1)$ be as in Lemma~\ref{L2.8}. Since $f$ is Lipschitz and depends on only finitely many coordinates, by Lemma~\ref{L2.8}, we have
$$
\lim_{\delta\downarrow 0}\limsup_{m\to\infty}\E\big[\big|f\big(\apmX_{\tau_m+\delta}\big)-f\big(\apmX_{\tau_m}\big)\big|^{r_1}\big]=0.
$$
Hence by Aldous's tightness criterion (see \cite{Aldous1978}), the claim follows.
\gdm

\subsection{Proof of Proposition~\ref{P3.2}(ii): Martingale problem for limit points}
In the previous subsection, we proved that the sequence of laws of $(\apmX)_{m\in \N}$ is tight in $D_{\LBE}$ and is hence relatively compact by Prohorov's theorem. Let $X$ be a process who's law is an arbitrary limit point of that sequence.
Then there exists  a subsequence $(\apv{X}{m_k})_{k\in\N}$
 such that
\[ \apv{X}{m_k}\;\wlimk\; X\]
weakly in $D_{\LBE}$.
In order to ease the notation, in this section we will assume that the sequences $(\ve_m)_{m\in\N}$ and $(S_m)_{m\in\N}$ were chosen such that
\[ \apmX\;\wlimm\; X.\]

\begin{remark}
\label{R3.6}
By \equ{E2.12} and \equ{E2.14} in Lemma~\ref{L2.4} and Remark~\ref{R2.5}, using Fatou's lemma, we get that \equ{E1.28} holds for the limiting process $X$ and hence the stochastic integral in \equ{E1.26} is well defined.
\end{remark}
First, we derive estimates on the first and second moment of a limit point $X$.
\begin{lemma}
\label{L3.7}
For all $t>0$, $k,l\in S$ with $k\neq l$ and $i=1,2$ we have
\begin{equation}
\label{E3.1}
\begin{aligned}
\E\left[ X_{i,t}(k)\right] &\,\leq\,  \bS_t X_{i,0}(k),\\[1mm]
\E\left[ X_{1,t}(k)X_{2,t}(l)\right] &\,\leq\,  \bS_t X_{1,0}(k)\,\bS_tX_{2,0}(l).
\end{aligned}
\end{equation}
For every $p\in(0,1]$, there exists a constant $c_p$ such that
\begin{equation}
\label{E3.2}
\E\Big[ \sup_{t\leq T} X_{i,t}(k)^p\Big]\leq c_p\bigg(1+x_i(k)^p+\int_0^T \big[ (\bA+1) \bS_sx_1(k)\big]
\big[(\bA+1)\bS_sx_2(k)+1\big]\,ds\bigg).
\end{equation}
Moreover for any  non-negative function $\phi$ on $S$, $T,K>0$, and $i=1,2$,
\begin{equation}
\label{E3.3}
\P\left[\sup_{t\leq T} \langle X_{i,t}\,,\phi\rangle >K\right]
\leq K^{-1}\big\langle \bS_Tx_i,\,\phi\big\rangle.
\end{equation}

\end{lemma}
\paragraph{Proof.}
The inequalities in \equ{E3.1} follow from Corollary~\ref{C2.3}
with the help of Fatou's lemma by
switching to the Skorohod space with the a.s. convergence instead of weak convergence of the processes.

For the same reasons, \equ{E3.2} follows from Lemma~\ref{L2.6}. Here we also used
the trivial inequality $a^p\leq a+1$ for $p\leq 1$ and $ a\geq 0$.

Equation \equ{E3.3} follows from Lemma~\ref{L2.7} again by the properties of the weak convergence.
\gdm

Now we have to identify the equation for the
limiting point $X$. For this goal, it will be enough to identify the compensator measures of the limits of the  martingales  $\apm{\MM}_i(k)$. At this stage  it will be more convenient for us to use a different representation of those processes. Let $\apm{\CN}_\Delta(\{k\},\ARG),$ $k\in S$, be the family of  point process on $ \R_+\times  \R\times \R$  induced by the jumps of the processes $\apmX$, that is
$$
\apm{\CN}_\Delta\big(\{k\},dt,dz\big)=\sum_{s}\1_{\{\Delta \apmX_s(k)\not=0\}}\delta_{(s,\,\Delta \apmX_s(k))}(dt,dz).
$$
Let $\apm{\CN}_\Delta{}^{\prime}$ denote the corresponding compensator measure and let $\apm{\CM}_\Delta:=\apm{\CN}_\Delta-\apm{\CN}_\Delta{}^{\prime}$. Furthermore, define $\CN_\Delta$, $\CN_\Delta'$ and $\CM_\Delta$ similarly, but with $\apmX$ replaced by $X$.

Recall from \equ{E2.11} that
$$\begin{aligned}
\apmX_{t,i}(k)=\,&
\apmX_{0,i}(k)
+\int_{0}^{t}\apmCA \apmX_{s,i}(k)\,ds\\
&+ \int_{0}^{t}\int_{[0,\infty)\times E^\ve} \apm{J}_i\left(y,\apmX_{s-}(k)\right)\1_{[0,\apm{I}(\apmX_{s-};k)]}(a)\,\CM\big(\{k\},ds,d(a,y)\big)\\
&+\int_0^t\int_{[0,\infty)\times E} \ve y_2\,\1_{[0, \apmIN_i(\apmX_{s-};k)]}(a)\CM^i\big(\{k\},ds,d(a,y)\big).
\end{aligned}
$$
Let $e_1:=(1,0)$ and $e_2:=(0,1)$.
As $\CM$, $\CM^1$ and $\CM^2$ are independent compensated jump measures with intensity $\CN'$ and since $\CN'$ is absolutely continuous (which implies that there are no double points), we get for $B\subset\R^2$ measurable that
$$\begin{aligned}\apm\CM_\Delta(\{k\},dt,B)=&\int_{[0,\infty)\times E^\ve} \1_{B\setminus\{0\}}\big(\apm J\big(y,\apmX_{t-}(k)\big)\big)\,\1_{[0,\apm{I} (\apmX_{s-};k)]}(a)\,\CM\big(\{k\},dt,d(a,y)\big)\\
&+\sum_{i=1}^2\int_{[0,\infty)\times E} \1_{B}(\ve_my_2 e_i)\,\1_{[0,I^{0,(m)}_i(\apmX_{s-};k)]}(a)\,\CM^i\big(\{k\},dt,d(a,y)\big).\end{aligned}
$$
Hence for $i=1,2$, we have
$$\begin{aligned}
\apmX_{t,i}(k)=
x_i(k)
+\int_{0}^{t}\apmCA \apmX_{s,i}(k)\,ds&+ \int_0^t \int_{\R^2}z_i\,\apm\CM_{\Delta}(\{k\},ds,dz).
\end{aligned}
$$
\begin{lemma}
\label{L3.8}
The weak limit point $X$ is a solution of
\begin{equation}
\label{E3.4}
X_{t}(k)\;=\;
x(k)
+\int_{0}^{t}\CA X_{s}(k)\,ds+ \int_0^t \int_{\R^2}z \,\CM_\Delta\big(\{k\},ds,dz\big),
\end{equation}
where
$$\CM_\Delta=\CN_\Delta-\CN_\Delta'.$$
The compensator measure of the point process $\CN_\Delta$ is given by
$$\CN_\Delta'\big(\{k\},dt,B\big)=\int_E \1_{B\setminus\{0\}} J(y, X_{t-}(k))\,I(X_t;k)\,dt\,\nu(dy)\mf B\subset \R^2\mbox{ measurable},\;\; k\in S.$$
\end{lemma}
\begin{proof}
By Theorem IX.2.4 of \cite{bib:jacshir87}, it is enough to check for all $k\in S$ that
\begin{equation}
\label{E3.5}
\Bigg(\apmX(k), \int_{\R^2}\apm{\CN}_\Delta{}^{\prime}\big(\{k\},dt,dz\big)\,G(z)\bigg)\stackrel{m\to\infty}{\Longrightarrow} \bigg(X(k), \int_{\R^2}\CN_\Delta^{\,\prime}\big(\{k\},dt,dz\big)\,G(z)\bigg),
\end{equation}
for
\begin{itemize}
\item[(i)]
each continuous $G\in C_b^+(\R^2)$ which is $0$ in some neighbourhood of $0$ and
\item[(ii)]
$G=h_ih_j$, $i,j=1,2$, for some bounded continuous function $h=(h_1,h_2):\R^2\to\R^2$ that fulfils $h(x)=x$ in some neighbourhood of $0$.
\end{itemize}
Note that
\begin{equation}
\label{E3.6}
\begin{aligned}
\int_{\R^2}G(z)\,\apm\CN_{\Delta}{}^\prime\big(\{k\},dt,dz\big)
=&
\int_{E^{\ve_m}}G\big(\apm J(y,\apmX_{t-}(k))\big)\,\apm\CN{}^{\prime}
\big(\{k\},dt, [0,\apm I(X_{t-};k)],dy\big)\\
&\,+
\sum_{i=1}^2\int_{E}G(\ve_my_2 e_i)
\,\apm{\CN}{}^{\prime}
\Big(\{k\},dt,[0,\apmIN_i(X_{t-};k)],dy\Big)\\
=&
\,\apm{I}(X_{t-};k)\bigg(\int_{E^{\ve_m}}G\big(\apm J(y,\apmX_{t-}(k))\big)\,\nu(dy)\bigg)\,dt\\
&\,+
\sum_{i=1}^2\apmIN_i(X_{t-};k)\bigg(\int_{E}G(\ve_my_2 e_i)\,\nu(dy)\bigg)\,dt\\
\end{aligned}
\end{equation}
and similarly
\begin{equation}
\label{E3.7}
\int_{\R^2}G(z)\,\CN^{\,\prime}_\Delta\big(\{k\},dt,dz\big)
\;=\;
I(X_{t-};k)\bigg(\int_{E}G\big(J(x,X_{t-}(k))\big)\,\nu(dx)\bigg)\,dt.
\end{equation}
By Skorohod's representation theorem, we may assume that $\apmX$ and $X$ are defined on one probability space such that $\apmX$ converges almost surely to $X$ (and not only weakly).
In order to show \equ{E3.5}, it is enough to show that the right hand side of \equ{E3.6} converges to that of \equ{E3.7}. In order to show this, it is enough to show that for any $k\in S$, uniformly in $z$ on compacts of $\LBE$, we have
\begin{equation}
\label{E3.8}
 \apm I(z;k) \int_{E^{\ve_m}}G\big(\apm J(y,z(k))\big)\,\nu(dy)\longrightarrow I(z;k)\int_{E}G\big(J(y,z(k))\big)\,\nu(dy)  \mas m\rightarrow\infty,
\end{equation}
and for $i=1,2$,
\begin{equation}
\label{E3.9}
\apmIN_i(z;k) \int_{E}G(\ve_m y_2 e_i)\,\nu(dy) \longrightarrow 0\mas m\rightarrow\infty.
\end{equation}
The proof of Lemma~\ref{L3.8} is thus complete when we have shown the following two lemmas.
\end{proof}
\begin{lemma}
\label{L3.9}
For any bounded measurable function $G:\R^2\to\R$ such that $G(x)=0$ for all $x\in\R^2$ with $\|x\|_\infty\leq\delta$ for some $\delta>0$, we have \equ{E3.8} and \equ{E3.9}.
\end{lemma}
\begin{lemma}
\label{L3.10}
For any $j,j'=1,2$, and for $G(x):=G_{jj'}(x):=x_jx_{j'}\1_{\{\|x\|_\infty\leq 1\}}$, we have \equ{E3.8} and \equ{E3.9}.
\end{lemma}

Fix $k\in S$ and a compact set $L\subset\LBE$. Then there exists a $K<\infty$ such that $|z_i(k)|\leq K$ and $|\apmCA z_i(k)|\leq K$ for all $z\in L$, $i=1,2$. \par

For $m$ large enough such that $S_m\ni k$, we have $\apmCA z(k)\uparrow \CA z(k)$. Since $\CA z(k)$ and $\apmCA z(k)$ are continuous functions of $z$ (see \equ{E1.8}), we get uniform convergence on compacts, that is,
\begin{equation}
\label{E3.10}
\eta_m:=\sup_{z\in L}\left|\apmCA z(k)-\CA z(k)\right|\limm0.
\end{equation}

\par\textbf{Proof (of Lemma~\ref{L3.9}).}\quad
\textbf{Case 1: $z(k)=0$.\quad}If $z(k)=0$, then both sides in \equ{E3.8} equal zero and it remains to show \equ{E3.9}.
Note that $\apmIN_i(z;k)\leq K/\ve_m$ and recall that $G(x)=0$ if $\|x\|\leq \delta$. Hence, by Lemma~\ref{LA.1},
$$
\left|\apmIN_i(z;k) \int_{E}G(\ve_m y_2 e_i)\,\nu(dy) \right|
\leq \frac{K\,\|G\|_\infty}{\ve_m}\,\nu\big(\{0\}\times(\delta/\ve_m,\infty)\big)
\leq \frac{2}{\pi}\frac{K\,\|G\|_\infty}{\delta^2}\,\ve_m.
$$
\par
\textbf{Case 2: $z(k)\neq0$.\quad}
In this case, the left hand side of \equ{E3.9} equals zero and it remains to show \equ{E3.8}.
We consider, without loss of generality, the case $z_1(k)>0$.
By Corollary~\ref{CA.6},
\begin{equation}
\label{E3.11}
\begin{aligned}
\left|\int_EG(J(y,z(k)))\nu(dy)\right|
&\leq \|G\|_\infty\,\nu\big(\big\{y:\,\|J(y,z(k))\|_\infty>\delta\big\}\big)\\
&= \|G\|_\infty\,\nu\big(\big\{y:\,\|J(y,(1,0))\|_\infty>\delta/z_1(k)\big\}\big)\\
&\leq \|G\|_\infty\,\frac4\pi\frac{z_1(k)^2}{\delta^2}\leq \|G\|_\infty\,\frac4\pi\frac{K^2}{\delta^2} .
\end{aligned}
\end{equation}

Assume that $\ve_m<\delta/K$.
\par
\textbf{Case 2(i): $\ve_m\leq z_1(k)$.\quad}In this case, by \equ{E3.10},
\begin{equation}
\label{E3.12}
\left|\apm I(z;k)-I(z;k)\right|=\left|z_1(k)^{-1}\apmCA z(k)-z_1(k)^{-1}\CA z(k)\right|\;\leq\; z_1(k)^{-1}\,\eta_m.
\end{equation}
For any $y\in E$, we have $J_i(y,z(k))=\apm J_i(y,z(k))$, $i=1,2$. For $y\in E\setminus E^{\ve_m}$, we have $J_2(y,z(k))=0$ and
$$|J_1(y,z(k))|\;\leq\; \ve_m z_1\;\leq\; \ve_m K\;\leq\; \delta.$$
Hence $G(J(y,z(k)))=0$ for $y\in E\setminus E^{\ve_m}$. This shows
\begin{equation}
\label{E3.13}
\int_{E^{\ve_m}}G\big(\apm J(y,z(k))\big)\,\nu(dy)= \int_{E^{\ve_m}}G\big(J(y,z(k))\big)\,\nu(dy)=\int_{E}G\big(J(y,z(k))\big)\,\nu(dy).
\end{equation}
Using  \equ{E3.13} and \equ{E3.11}, the difference of the left hand side and right hand side in \equ{E3.8} is bounded by
$$\|G\|_\infty\,\frac4\pi\frac{z_1(k)^2}{\delta^2}
\left|\apm I(z;k)-I(z;k)\right|\leq\frac{4K\,\|G\|_\infty}{\pi\delta^2}\,\eta_m\limm0,
$$
where the convergence follows by \equ{E3.10}.
\par
\textbf{Case 2(ii): $\ve_m> z_1(k)$.\quad}In this case, by \equ{E3.11},
$$\left|I(z;k)\int_{E}G\big(J(y,z(k))\big)\,\nu(dy)\right|\leq
\frac{K}{z_1(k)}\|G\|_\infty\,\frac4\pi\frac{z_1(k)^2}{\delta^2}\leq
\frac{4K\|G\|_\infty}{\pi\delta^2}\,\ve_m
$$
and similarly
$$\left|\apm I(z;k)\int_{E^{\ve_m}}G\big(\apm J(y,z(k))\big)\,\nu(dy)\right|\leq
\frac{4K\|G\|_\infty}{\pi\delta^2}\,\ve_m.
$$

This shows \equ{E3.8} and \equ{E3.9} and finishes the proof of Lemma~\ref{L3.9}.
\gdm\medskip
\par
\textbf{Proof (of Lemma~\ref{L3.10}).}\quad
The proof of this lemma uses some of the estimates from the proof of Lemma~\ref{L3.9}. Assume that $\ve_m\leq 1/3$.\par
\textbf{Case 1: $z(k)=0$.\quad}If $z(k)=0$, then both sides in \equ{E3.8} equal zero and it remains to show \equ{E3.9}.
Note that
$$\apmIN_i(z;k)\int_EG_{jj'}(\ve_my_2e_i)\,\nu(dy)=0\mfalls j\neq i\mbs{or}j'\neq i.$$
Hence, now assume that $j=j'=i$. Recall that $\apmIN_i(z;k)\leq K/\ve_m$. Then, by Lemma~\ref{LA.2},
$$\apmIN_i(z;k)\int_EG_{ii}(\ve_my_2e_i)\,\nu(dy)=
\apmIN_i(z;k)\,\ve_m^2\int_{\{0\}\times(0,1/\ve_m)}y_2^2\,\nu(dy)\leq \frac{4K}{\pi}\ve_m\log(1/\ve_m)\limm0.$$
That is, the right hand side in \equ{E3.9} equals $0$ while the left hand side converges to $0$.
\par
\textbf{Case 2: $z(k)\neq0$.\quad} In this case \equ{E3.9} holds trivially. Without loss of generality, we assume that $z_1(k)>0$.
\par
\textbf{Case 2a: $j=j'=1$.\quad}
We have
\begin{equation}
\label{E3.14}
\begin{aligned}
\int_{E^{\ve_m}}G_{11}\big(\apm{J}(y,z(k))\big)\,\nu(dy)
&\leq\int_{E^{\ve_m}}G_{11}\big(J(y,z(k))\big)\,\nu(dy)\\
&\leq\int_EG_{11}\big(J(y,z(k))\big)\,\nu(dy)\\
&=z_1(k)^2\int_E(y_1-1)^2\1_{\{|y_1-1|\leq 1/z_1(k)\}}\1_{\{y_2\leq1/z_1(k)\}}\,\nu(dy).
\end{aligned}
\end{equation}
For $z_1(k)>1$ this equals (using Lemma~\ref{LA.4} for the inequality)
$$
=z_1(k)^2\int_{(1-1/z_1(k),1+1/z_1(k))\times\{0\}}(y_1-1)^2\,\nu(dy)
\leq \frac{2}{\pi}z_1(k).
$$
For $z_1(k)\in(0,1]$, the right hand side in \equ{E3.14} equals (using Lemma~\ref{LA.1} and \ref{LA.4} for the inequality)
$$\begin{aligned}
z_1(k)^2\bigg[\int_{(0,1+1/z_1(k))\times\{0\}}(y_1-1)^2&\,\nu(dy)
+\nu\big(\{0\}\times(0,1/z_1(k))\big)\bigg]\\
&\leq z_1(k)^2\frac2\pi\big[2\log\big(1+1/z_1(k)\big)+1\big]\leq 2z_1(k).
\end{aligned}
$$
Summing up, for all $z_1(k)>0$, we have
\begin{equation}
\label{E3.15}
\int_{E^{\ve_m}}G_{11}\big(\apm{J}(y,z(k))\big)\,\nu(dy)
\leq\int_EG_{11}\big(J(y,z(k))\big)\,\nu(dy)\;\leq 2z_1(k).
\end{equation}
Now for the difference of the two integrals.
We have (using Lemma~\ref{LA.4})
\begin{equation}
\label{E3.16}
\int_{E\setminus E^{\ve_m}}G_{11}\big(J(y,z(k))\big)\,\nu(dy)
\leq z_1(k)^2\int_{(1-\ve_m,1+\ve_m)\times\{0\}}(y_1-1)^2\,\nu(dy)
\leq \frac2\pi\, z_1(k)^2\,\ve_m.
\end{equation}
Let
$$\Delta:=\int_{E^{\ve_m}}\big[G_{11}\big((J(y,z(k))\big)-G_{11}\big((\apm{J}(y,z(k))\big)\big]\,\nu(dy).$$
For $z_1(k)\geq\ve_m$, we have $\Delta=0$. On the other hand, for $z_1(k)\in (0,\ve_m)$, we have
(using Lemma~\ref{LA.1})
\begin{equation}
\label{E3.17}
\begin{aligned}
\Delta
&=
z_1(k)^2\int_{E^{\ve_m}}\big[\1_{\{|y_1-1|\leq1/z_1(k)\}}\1_{\{1/\ve_m<y_2\leq1/z_1(k)\}}\big](y_1-1)^2\,\nu(dy)\\
&=
z_1(k)^2\nu\big(\{0\}\times(1/\ve_m,1/z_1(k)]\big)\\
&\leq z_1(k)^2\,\nu\big(\{0\}\times(1/\ve_m,\infty)\big)\;\leq\; z_1(k)^2\,\ve_m^2.
\end{aligned}
\end{equation}
Putting the estimates \equ{E3.12}, \equ{E3.15}, \equ{E3.16} and \equ{E3.17} together and recalling that $\apm{I}(z;k)\leq I(z;k)\leq K/z_1(k)$, we get
$$
\begin{aligned}
\Bigg|I(z;k)\int_EG_{11}\big(J(y,z(k))\big)\,\nu(dy)
&-\apm{I}(z;k)\int_{E^{\ve_m}}G_{11}\big(\apm{J}(y,z(k))\big)\,\nu(dy)
\Bigg|\\
&\leq \big|I-\apm{I}\big|\left(\int_E+\int_{E^{\ve_m}}\right)
+(I+\apm{I})\left|\int_E-\int_{E^{\ve_m}}\right|\\
&\leq
\frac{\eta_m}{z_1(k)}\,4z_1(k)+\frac{2K}{z_1(k)}\,(\ve_m+\ve_m^2)\,z_1(k)^2\\
&\leq 4\eta_m+4K^2\ve_m\limm0.
\end{aligned}
$$
\par\textbf{Case 2b: $j=1$ and $j'=2$.\quad}
We have
$$\int_{E\setminus E^{\ve_m}}\big|G_{12}\big(J(y,z(k))\big)\big|\,\nu(dy)=0.$$
Since $J(y,z(k))=\apm{J}(y,z(k))$ and $I(z;k)=\apm{I}(z;k)$ if $z_1(k)\geq\ve_m$, in this case we infer
$$
I(z;k)\int_{E}G_{12}\big(J(y,z(k))\big)\,\nu(dy)
=\apm{I}(z;k)\int_{E^{\ve_m}}G_{12}\big(\apm{J}(y,z(k))\big)\,\nu(dy).
$$
Now assume $z_1(k)\in(0,\ve_m)$.
We have (using Lemma~\ref{LA.2})
\begin{equation}
\label{E3.18}
\begin{aligned}
I(z;k)\,\int_E\big|G_{12}\big(J(y,z(k))\big)\big|\,\nu(dy)
&=I(z;k)\,z_1(k)^2\int_E|y_1-1|\,y_2\,\1_{\{|y_1-1|\leq 1/z_1(k)\}}\,\1_{\{y_2\leq1/z_1(k)\}}\,\nu(dy)\\
&=I(z;k)\,z_1(k)^2\int_{\{0\}\times(0,1/z_1(k)]}y_2\,\nu(dy)\\
&\leq K\,z_1(k)\;\leq\; K\,\ve_m.
\end{aligned}
\end{equation}
Similarly,
\begin{equation}
\label{E3.19}
\begin{aligned}
\apm{I}(z;k)\,\int_{E^{\ve_m}}\big|G_{12}\big(\apm{J}(y,z(k))\big)\big|\,\nu(dy)
\;&=\; \apm{I}(z;k)\,z_1(k)\,\ve_m\int_{\{0\}\times(0,1/\ve_m]}y_2\,\nu(dy)\\
&\leq\; K\,z_1(k)\;\leq\; K\,\ve_m.
\end{aligned}
\end{equation}
\par\textbf{Case 2c: $j=j'=2$.\quad}
For $z_1(k)\geq\ve_m$, we have
$$\int_EG_{22}\big(J(y,z_1(k))\big)\,\nu(dy)
=\int_{E^{\ve_m}}G_{22}\big(\apm{J}(y,z_1(k))\big)\,\nu(dy)$$
and $I(z;k)=\apm{I}(z;k)$.
That is, \equ{E3.8} holds trivially. Hence now assume that $z_1(k)\in(0,\ve_m)$.
Then, by Lemma~\ref{LA.2} (noting that $z\mapsto z\log(1/z)$ is monotone increasing on $z\leq\ve_m\leq1/3$),
\begin{equation}
\label{E3.20}
\begin{aligned}
I(z;k)\int_EG_{22}\big(J(y,z_1(k))\big)\,\nu(dy)
&=I(z;k)\,z_1(k)^2\int_{\{0\}\times(0,1/z_1(k)]}y_2^2\,\nu(dy)\\
&\leq \frac{4K}\pi\,z_1(k)\log\big(1/z_1(k)\big)\\
&\leq\frac{4K}{\pi}\,\ve_m\,\log\big(1/\ve_m\big).
\end{aligned}
\end{equation}
Similarly,
\begin{equation}
\label{E3.21}\begin{aligned}
\apm{I}(z;k)\int_{E^{\ve_m}}G_{22}\big(\apm{J}(y,z_1(k))\big)\,\nu(dy)
&=\apm{I}(z;k)\,\ve_m^2\,\int_{\{0\}\times(0,1/\ve_m]}y_2^2\,\nu(dy)\\
&\leq \frac{4K}\pi\,\ve_m\,\log\big(1/\ve_m\big).
\end{aligned}
\end{equation}
The expressions in \equ{E3.20} and \equ{E3.21} (and hence their differences) are bounded by $(4K/\pi)\ve_m\log(1/\ve_m)\limm0$.
\gdm
\medskip

For $y\in\R^2$, define $h_y:E\to\CC$ by
$$
h_y(z):=e^{(z-(1,0))\mtimes y}-1-(z-(1,0))\mtimes y,
$$
Furthermore, for $x\in E$, let
\begin{equation}
\label{E3.22}
h_{x,y}(z):=\cases{h_{x_1y}(z),&\mfalls x_1>0,\\[1mm]
h_{x_2(y_2,y_1)}(z),&\mfalls x_2>0,\\[1mm]
0,&\msonst.}
\end{equation}
Note that
$$
h_{x,y}(z)=e^{J(z,x)\mtimes y}-1-J(z,x)\mtimes y.
$$
\begin{lemma}
\label{L3.11}
For any $x,y\in E$, we have
$$\int_E h_{x,y}\,d\nu=0.$$
\end{lemma}
\textbf{Proof.}
By symmetry, it is enough to consider the case $x=(1,0)$. Note that $h_{(1,0),y}=h_y$.

Let $B$ be planar Brownian motion started at $z\in[0,\infty)^2$ and recall that $Q_z$ is its harmonic measure on $[0,\infty)^2$; that is, $Q_z$ is the distribution of $B_\tau$ where $\tau$ is the exit time from $(0,\infty)^2$.

By formally extending the domain of $h_y$ to $[0,\infty)^2$, and applying It{\^o}'s formula, we see that $(h_y(B_t))_{t\geq0}$ is a ($\CC$-valued) martingale. Since $h_y$ grows at most linearly, we have $\E[|h_y(B_t)|^2]\leq C\,\E[\|B_t\|^2]\leq C(\|z\|^2+2t)$ for some $C<\infty$. It is well known that $\E[\tau^p]<\infty$  for $p\in[1/2,1)$ (see \cite[Lemma 3.5]{KM1} or \cite[Equation (3.8)]{Burkholder1977} with $\alpha=\pi/2$)). Hence, by Burkholder's inequality, $|h_y(B_{t\wedge \tau})|$, $t\geq0$, is bounded in $L^p$ for all $p\in[1,2)$. Applying the martingale convergence theorem, we get $\int h_y \,dQ_z=h_y(z)$.

Recall that $\nu$ is the vague limit of $\ve^{-1}Q_{(1,\ve)}$ as $\ve\to0$. Hence we can hope that $\int h_y\,d\nu$ can be written as the limit of $\ve^{-1}\int h_y\,dQ_{(1,\ve)}$. In fact, since $h_y$ grows at most linearly and since $h_y(1,0)=0$, by \cite[Lemma 5.5]{KM1}, we get
\[
\int h_y\,d\nu
=\lim_{\ve\to0}\frac{1}{\ve}
\int h_y\,dQ_{(1,\ve)}\\
=\lim_{\ve\to0}\frac{1}{\ve}
h_y(1,\ve)=0.
\eope\]

Now we are ready to write the martingale problem for any limiting point
$X$.

\begin{lemma}
\label{L3.12}
Let $y\in \LFE$ and let $X$ be any limit point of $(\apmX)_{m\in\N}$. Then
\begin{equation}
\label{E3.23}
M_t:=e^{\lb X_t,y\rb}-e^{\lb X_0,y\rb}-\int_0^t\lb\CA X_s,y\rb\,e^{\lb X_{s},y\rb}\,ds,
\end{equation}
is a martingale.
\end{lemma}

\begin{proof}
By It{\^o}'s formula for discontinuous semimartingales (see, e.g., \cite[Theorem 32]{bib:pr04}) applied to $X$  solving \equ{E3.4}, we get that
\begin{equation}
\label{E3.24}
\begin{aligned}
e^{\lb X_t,y\rb}&-e^{\lb X_0,y\rb}-\int_0^t\lb\CA X_s,y\rb\,e^{\lb X_{s},y\rb}\,ds\\
&\quad-\sum_{k\in S}\int_0^t\int_E\CN'\big(\{k\},ds,dz\big)\,e^{\lb X_{s},y\rb}\big[e^{J(z, X_{s}(k)) y(k)}-1-J(z, X_{s}(k))\mtimes y(k)\big]
\end{aligned}
\end{equation}
is a local martingale.
By the definition of $\CN'$ and $h_{x,y}$ in \equ{E3.22}, using Lemma~\ref{L3.11}, we get
$$
\begin{aligned}
\int_0^t\int_E
\CN'(\{k\}&,ds,dz)e^{\lb X_{s},y\rb}\big[e^{J(z, X_{s}(k))\mtimes y(k)}-1-J(z, X_{s}(k))\mtimes y(k)\big]\\
=&\int_0^te^{\lb X_{s},y\rb}\int_Eh_{X_{s}(k),y(k)}\,d\nu\;=\;0.\end{aligned}
$$
Hence also $M$ is a local martingale and it remains to show that $M$ is in fact a martingale.
Applying \equ{E3.2}, \equ{E1.10}, \equ{E1.9},
for all $T>0$, we get
$$
\begin{aligned}
\E\bigg[\sup_{t\leq T}\bigg|\int_0^t\lb\CA X_s,y\rb\,e^{\lb X_{s},y\rb}\,ds\bigg|\bigg]
&\leq \E\left[\int_0^T
\big\langle \bA X_{1,s}\,, |y_1|\big\rangle +\big\langle \bA X_{2,s}, |y_2|\big\rangle\,ds\right]\\
&\leq \int_0^T
\big\langle \bA \bS_sx_1+\bA \bS_sx_2\,, |y|\big\rangle\,ds\\
&\leq \sum_{k\in S} \sum_{i=1}^2 \frac{e^{\LSC s}\left\Vert x_i\right\Vert_{\beta}}{\beta(k)}|y(k)|
\;<\;\infty.
\end{aligned}
$$
Note that the last inequality follows since $y$ has finite support.
Since the exponents in \equ{E3.23} have nonpositive real part, they are bounded. Hence we conclude that
\[ \E\left[\sup_{t\leq T} |M_t|\right]<\infty.\]
But this implies  that $M$ is indeed a martingale.
\end{proof}

By Lemma~\ref{L3.12}, any limit point of $(\apmX)_{m\in\N}$ solves the martingale problem \equ{MP1}. Hence the proof of Proposition~\ref{P3.2}(ii) is now complete.
\gdm

\section{Uniqueness of solutions to the martingale problem \equ{MP1}}
\label{S4}
\setcounter{equation}{0}
\setcounter{theorem}{0}
This section is devoted to the proof of the following proposition.
\begin{proposition}
\label{P4.1}
There is a unique solution to the martingale problem~\equ{MP1} and the map $x\mapsto P_x$ is measurable.
\end{proposition}
The proposition will be proved via a series of lemmas.
\subsection{The dual martingale problem}
\label{S4.1}
Recall from \equ{E1.14} that $\LFE$ is the space of $y\in E^S$ with only finitely many nonzero coordinates.
Recall that $\CA^*$ and $\bA^*$ are the transpose matrices of $\CA$ and $\bA$, respectively, and that $\cS^*$ and $\bS^*$ are the corresponding semigroups.
Recall the definition of $\LIBE$ from \equ{E1.15} and let $D_{\LIBE}=D_{\LIBE}[0,\infty)$ be the Skorohod space of $\LIBE$-valued c{\`a}dl{\`a}g paths.

We will define a $D_{\LIBE}$ valued process $Y=(Y_1,Y_2)$ that solves the martingale problem  which is \emph{dual} to \equ{MP1}. Recall the function $H$ from \equ{E1.12}.
\begin{proposition}
 \label{P4.2}
Let $Y_0=y\in \LFE$.
Then there exists the process $Y\in D_{\LIBE}$ which satisfies  the following martingale problem: For all $x\in\LBE$,
\[
\label{MP1*}
M^{*,x,y}_t:=H(x,Y_t) -H(x,Y_0)-\int_0^t \Lb x,\CA^*Y_s\Rb\, H(x,Y_s)\,ds\tag{MP$^*_1$}
\]
is martingale.
\end{proposition}
\paragraph{Proof.} The existence of a process $Y\in D_{\LBE}$ that solves the
martingale problem \equ{MP1*} for all $x\in\LFE$ follows immediately from Proposition~\ref{P3.2}, since the
assumptions on $\CA$ are satisfied by $\CA^*$ as well. In fact, by assuming that $Y$ is constructed similarly as $X$ in Section~\ref{S3}, we may assume that Lemma~\ref{L3.7} holds for $Y$. To finish the proof  we have to show  that this $Y$ takes in fact values in the subspace $\LIBE$ and that
$Y$ satisfies \equ{MP1*} for all $x\in\LBE$ (not only for $x\in\LFE$).

\textbf{Step 1.}\quad First we show that  $Y$ takes values in $\LIBE$. It is enough to show that
for all $\phi\in\LB$ and $i=1,2$, we have
\begin{equation}
\label{E4.1}
\P\left[\sup_{t\leq T} \big\langle Y_{i,t}\,,\phi\big\rangle >K\right]
\to 0\mas K\to \infty.
\end{equation}
By \equ{E3.3} in Lemma~\ref{L3.7}, for any $\phi\in \LB$ and $K>0$, we get (recall that $(\bS_T^*)$ is the semigroup generated by the transposed matrix $\bA^*$)
\begin{equation}
\label{E4.2}
\P\left[\sup_{t\leq T} \big\langle Y_{i,t}\,,\phi\big\rangle >K\right]
\leq K^{-1}\big\langle  \bS_T^* Y_{i,0},\;\phi\big\rangle=K^{-1}\big\langle  Y_{i,0},\;\bS_T\phi\big\rangle.
\end{equation}
By \equ{E1.11}, we have that $\bS_T\phi(k)<\infty$ for all $k$ and since $Y_{i,0}$ has finite support, the right hand side of \equ{E4.2} is finite.

\textbf{Step 2.}\quad
Now we show that $Y$ satisfies \equ{MP1*} for all $x\in\LBE$. Let
$(x_n)_{n\in\N}$ be a sequence in $\LFE$ such that
$$x_n\uparrow x\mas n\to\infty.$$
Then $M^{*,x_n,y}$ is a martingale for any $n\in\N$.
By \equ{E4.2}, for any $T>0$,
\begin{equation}
\label{E4.3}
\sup_{s\leq T}\big|H(x_n,Y_s)-H(x,Y_s)\big|\limn0\mfs\mbsl{(and hence in $L^1$).}
\end{equation}
Note that
$$|\lb x_n,\CA^*Y_s\rb|\;\leq\;
2\,\big\langle x_1+x_2,\,\bA^*(Y_{1,s}+Y_{2,s})\big\rangle
\;=\;2\,\big\langle \bA(x_1+x_2),Y_{1,s}+Y_{2,s}\big\rangle.
$$
Consequently, for all $T>0$ and $t\in[0,T]$, we get
\begin{equation}
\label{E4.4}
\begin{aligned}
\left|\int_0^t \lb x_n,\CA^*Y_s\rb\, H(x_n,Y_s)\,ds\right|
&\leq
\int_0^t |\lb x_n,\CA^*Y_s\rb|\,\,ds\\
&\leq
\int_0^T |\lb x_n,\CA^*Y_s\rb|\,\,ds\\
&\leq2\int_0^T\big\langle \bA(x_1+x_2),Y_{1,s}+Y_{2,s}\big\rangle\,ds.
\end{aligned}
\end{equation}
By Lemma~\ref{L3.7}, the expectation of the right hand side of \equ{E4.4} is bounded by
\begin{equation}
\label{E4.5}
\begin{aligned}
2\int_0^T\big\langle &\bA(x_1+x_2),\textbf{S}_s^*(Y_{1,0}+Y_{2,0})\big\rangle\,ds\\
&=
2\int_0^T\big\langle \textbf{S}_s\bA(x_1+x_2),Y_{1,0}+Y_{2,0}\big\rangle\,ds\\
&\leq
2e^{\LSC T}\big(\|x_1\|_\beta+\|x_2\|_\beta\big)\sum_{k\in S}\frac{Y_{1,0}(k)+Y_{2,0}(k)}{\beta(k)}<\infty.
\end{aligned}
\end{equation}
By dominated convergence, the integral term in the definition of $M^{*,x_n,y}$ converges in $L^1$ to the corresponding integral term for $M^{*,x,y}$. Hence $M^{*,x_n,y}_t$ converges in $L^1$ to $M^{*,x,y}$ for each $t$. Consequently, $M^{*,x,y}$ is a martingale.\gdm

\subsection{Moment bounds for solutions of the martingale problem}
\label{S4.2}
In Lemma~\ref{L3.7}, we established a bound on the first moments of those solutions $X$ of the martingale problem \equ{MP1} that arise as limiting points of the approximating processes $(\apmX)_{m\in\N}$. In order to show uniqueness of the solution to \equ{MP1},
we need to establish a similar bound for \emph{any} solution to \equ{MP1}. In fact we will establish a slightly stronger
result, but first let us define the notion of the {\it local} martingale problem. We say that $X$ solves {\it local}
 martingale problem \equ{MP1} with $X_0=x\in\LBE$ if for any $y\in\LFE$, the process $M^{x,y}$ is a local martingale.
Now we are ready to prove the following lemma.
\begin{lemma}
\label{L4.3}
Let $x\in \LBE$ and let $X$ be a solution to the local martingale problem \equ{MP1} with $X_0=x$. Then
\begin{itemize}
\item[(i)]
for all $k\in S$, $t\geq0$ and for $i=1,2$, we have
$$
\E\left[X_{i,t}(k)\right] \leq \bS_t (x_1+x_2)(k),$$
\item[(ii)] and $X$ is a solution to the martingale problem \equ{MP1} with
$X_0=x\in\LBE$.
\end{itemize}
\end{lemma}
\begin{proof}{\bf (i)}\quad
 Let $y=(1,1)\1_{\{k\}}\in\LFE$ be the test function that takes the value $(1,1)\in E$ at $k$ and is zero otherwise.
For $K>0$ define the stopping time
$$\tau_K = \inf\big\{t\geq0:\; \| X_{1,t}+X_{2,t}\|_{\beta}\geq K\big\}.
$$
Since a bounded local martingale is a martingale, and since for every $\ve>0$,
\begin{equation}
\label{E4.6}
e^{\lb X_{t\wedge\tau_K},\ve y\rb}
 - e^{\lb x,\ve y\rb}
 - \ve\int_0^{{t\wedge\tau_K}} e^{\lb X_s,\ve y\rb}
\lb \CA X_s,  y\rb\,ds
\end{equation}
is bounded by $2+2\ve T\Gamma K/\beta(k)$ for $t\leq T$, the expression in \equ{E4.6} is in fact martingale.
Hence, we have
$$
\E\Big[\ve^{-1}\big(1- e^{\lb X_{t\wedge\tau_K},\ve y\rb}\big)\Big]=
\ve^{-1}\big(1- e^{\lb x,\ve y\rb}\big)
-\E\left[\int_0^{{t\wedge\tau_K}} e^{\lb X_s,\ve y\rb}
\lb \CA X_s,  y\rb\,ds\right].
$$
Note that $\Re\lb x,\ve y\rb\leq0$ for all $x\in E^S$. Hence
$$
\Re\big(1- e^{\lb X_{t\wedge\tau_K}, \ve y\rb}\big)\geq0.
$$
Using Fatou's lemma we get
$$\begin{aligned}
\label{E}
\E\big[ X_{1,t\wedge\tau_K}(k)+&\,X_{2,t\wedge\tau_K}(k)\big]
\\
&=
\frac12\E\left[ \lim_{\ep \downarrow 0}\Re\ve^{-1}\big(1- e^{\lb X_{t\wedge\tau_K},\ve y\rb}\big)\right]\\
&\leq\frac12 \liminf_{\ep\downarrow 0} \
\ve^{-1}\Re\E\left[1- e^{\lb X_{t\wedge\tau_K},\ve y\rb}\right]\\
&=
 x_1(k)+x_2(k)
-\frac12\limsup_{\ep\downarrow 0} \Re\left(\E\left[\int_0^{{t\wedge\tau_K}} e^{\lb X_s, \ve y\rb}
\lb \CA X_s\,, y\rb\,ds\right]\right).
\end{aligned}
$$
Using dominated convergence (recall $\tau_K$), we obtain (recall \equ{E1.10} and $\LSC$ from \equ{E1.6})
\begin{equation}
\label{E4.7}
\begin{aligned}
\E\big[ X_{1,t\wedge\tau_K}(k)+&\,X_{2,t\wedge\tau_K}(k)\big]\\
&\leq  x_1(k)+x_2(k)
- \frac12\Re\left(\E\left[\int_0^{t\wedge\tau_K}
\lb \CA X_s, y\rb\,ds\right]\right)
\\
&\leq  x_1(k)+x_2(k) + \E\left[\int_0^{{t\wedge\tau_K}}
\big(\bA X_{1,s}(k)+\bA X_{2,s}(k)\big)\,ds\right]\\
&\leq  x_1(k)+x_2(k) +2K\LSC t/\beta(k)\;<\;\infty.
\end{aligned}
\end{equation}
From \equ{E4.7}, we get
$$
\begin{aligned}
\E\big[ X_{1,t\wedge\tau_K}&(k)+X_{2,t\wedge\tau_K}(k)\big]\\
&\leq
x_1(k)+x_2(k) +  \E\left[\int_0^{t}
\bA (X_{1,s\wedge\tau_K}+X_{2,s\wedge\tau_K})(k)\,ds\right].
\end{aligned}
$$
Since both side are finite by \equ{E4.7}, standard arguments yield
$$
\E\big[X_{1,t\wedge\tau_K}(k)+X_{2,t\wedge\tau_K}(k)\big]\leq
\bS_t (x_1+x_2)(k)\mfa t\geq 0\mbs{and} K\geq0.
$$
Letting $K\rightarrow \infty$ and using Fatou's lemma, we obtain
$$
\E\big[ X_{1,t}(k)+X_{2,t}(k)\big]\leq
\bS_t (x_1+x_2)(k).
$$
This finishes the proof of {(i)}.\medskip

{\bf (ii)}\quad
We have to show that the local martingale
\[ M^{x,y}_t=H(X_t,y) -H(x,y)-\int_0^t \lb\CA X_s,y\rb\, H(X_s,y)\,ds\]
is in fact a martingale. The argument is similar as in the proof of
Lemma~\ref{L3.12}. Therefore, we omit the details.
\end{proof}

\begin{corollary}
\label{C4.4}
Let $x\in \LBE$ and $X$ be a solution to the martingale problem \equ{MP1} with $X_0=x$ and let $\phi\in \LIB$. Then for all $t\geq0$ and $i=1,2$, we have
\begin{equation}
\label{E4.8}
\E\big[\langle X_{i,t},\phi\rangle\big]\, \leq \,\big\langle \bS_t (x_1+x_2),\phi\big\rangle\,\leq\, e^{\LSC t}\big\langle x_1+x_2,\phi\big\rangle\,<\,\infty.
\end{equation}
\end{corollary}
\begin{proof}
The first inequality is a consequence of the previous lemma, the second is due to \equ{E1.9} and the third is due to the very definition of $\LIB$.
\end{proof}
\begin{corollary}
\label{C4.5}
Let $X_0=x\in \LBE$ and $X$ be a solution to~\equ{E1.26}. Then $X$ is a solution to
the martingale problem \equ{MP1} with $X_0=x$.
\end{corollary}
\begin{proof}
By It\^o's formula  (see \equ{E3.24} and \equ{E3.23} in the proof of Lemma~\ref{L3.12})
we get that $X$ is a solution to the local martingale problem \equ{MP1}. Then by Lemma~\ref{L4.3}(ii), we get that
it is also a solution to the martingale problem \equ{MP1}.
\phantom{.....}\end{proof}

By definition, for any $x\in\LBE$ and any solution $X$ of the martingale problem \equ{MP1} with $X_0=x$, the process $M^{x,y}$ is a martingale for any $y\in\LFE$. The $L^1$-estimates we have just established enable us to show that this is true even for $y\in\LIBE$.
\begin{lemma}
\label{L4.6}
For any $x\in\LBE$, any solution $X$ of \equ{MP1} with $X_0=x$ and any $y\in\LIBE$, the process $M^{x,y}$ is a martingale.
\end{lemma}
\begin{proof}
The proof is similar to Step 2 of Proposition~\ref{P4.2}. For the key estimate of \equ{E4.5}, here we employ Corollary~\ref{C4.4} instead of Lemma~\ref{L3.7}. We omit the details.
\end{proof}
\subsection{Uniqueness via duality}
\label{S4.3}
\begin{proposition}[Duality]
 \label{P4.7}
Let  $Y_0=y\in \LFE$ and let $Y\in D_{\LIBE}$ be a solution to the martingale problem \equ{MP1*}.
Let $X_0=x\in \LBE$ and let
$X\in D_{\LBE}$ be an a solution to the martingale  problem
\equ{MP1} which is independent of $Y$. Then $X$ and $Y$ are dual with respect to the function $H$:
\begin{equation}
\label{E4.9}
\E\big[ H(X_t,Y_0)\big]=\E\big[ H(X_0,Y_t)\big]\mfa t\geq 0.
\end{equation}
\end{proposition}
\begin{proof}
Fix $t>0$. For $r,s\in[0,t]$, define
$$
\begin{aligned}
f(s,r)\;&=\; \E\big[ H(X_s,Y_r)\big]\mbsl{and} \\
g(s,r)\;&=\; \E\big[ \lb \CA X_s,Y_r\rb H(X_s,Y_r)\big]\;=\;\E\big[ \lb  X_s,\CA^*Y_r\rb H(X_s,Y_r)\big].
\end{aligned}
$$
By \equ{E4.5} and Corollary~\ref{C4.4}, we get
$$\begin{aligned}
\E\big[|M^{*,X_s,y}_r|\big]\;&\leq\; 2+2 e^{\LSC r}\,\E\big[\|X_{1,s}+X_{2,s}\|_\beta\big]\sum_{k\in S}\frac{y_1(k)+y_2(k)}{\beta(k)}\\
&\leq\; 2+4\,e^{\LSC(r+s)}\|x_1+x_2\|_\beta\sum_{k\in S}\frac{y_1(k)+y_2(k)}{\beta(k)}\;<\;\infty.\end{aligned}
$$
Hence we can compute
\begin{equation}
\label{E4.10}
\begin{aligned}
f(s,r)- f(s,0)- \int_0^r g(s,u)\,du
&\;=\;\E\big[M^{*,X_s,y}_r\big]\\
&\;=\;\E\big[\E[M^{*,X_s,y}_r\,\big|\,X_s]\big]\;=\;0,
\end{aligned}
\end{equation}
since $M^{*,X_s,y}$ is a martingale with $M^{*,X_s,y}_0=0$.
Similarly, we get
\begin{equation}
\label{E4.11}
f(s,r)-f(0,r)-\int_0^s g(u,r)\,du \;=\;\E\big[M^{x,Y_r}_s\big]\;=\;0.
\end{equation}
Using the same estimates for $\E\big[\lb \CA X_s,Y_r\rb\big]$, we obtain
\begin{equation}
\label{E4.12}
\int_0^t\int_0^t|g(r,s)|\,dr\,ds\;<\;\infty.
\end{equation}

By \equ{E4.10}, \equ{E4.11}, \equ{E4.12} and Lemma~4.4.10 of~\cite{bib:kur86} (with their $f_1$ and $f_2$ both equal to our $g$), we get $f(0,t)=f(t,0)$.
\end{proof}
\paragraph{Proof of~Proposition~\ref{P4.1}.}
\textbf{Step 1 (One-dimensional distributions).}\quad
Let $x\in\LBE$ and let $X, X'\in D_{\LBE}$ be two solutions to the
martingale problem \equ{MP1} with $X_0=X_0'=x$. Let $y\in\LFE$ and let $Y$ be a solution to \equ{MP1*} with $Y_0=y$. By Proposition~\ref{P4.7}, we have
\begin{equation}
\label{E4.13}
\E\big[ H(X_t,y)\big]=\E\big[ H(X_0,Y_t)\big]
=
\E\big[ H(X'_t,y)\big]
\mfa t\geq 0.
\end{equation}
By Corollary 2.4 of \cite{KM1}, the family $\{H(\ARG,y),\,y\in\LFE\}$ is measure determining, hence the one-dimensional marginals of $X$ and $X'$ coincide.\medskip

\textbf{Step 2 (Finite-dimensional distributions).}\quad
Now we use a version of the well-known theorem claiming that ``uniqueness of one-dimensional distributions for solutions to a martingale problem implies uniqueness of finite-dimensional distributions''. More precisely,
denote by $\cF_t=\sigma(X_s,\,s\leq t)$ the $\sigma$-algebra generated by $X_s$, $s\leq t$. Note that $(\LFE,\,\|\ARG\|_\beta)$ is a separable Banach space. Hence there exists a regular conditional probability
$Q_s=\P[(X_{s+t})_{t\geq 0}\in \ARG\,\big|\,\cF_s]$. Arguing as in \cite[Corollary VI.2.2]{bib:b97}, we see that for almost all $\omega$, under $Q_s$ the canonical process is a solution to \equ{MP1} started in $X_s$.

Now we may argue as in the proof of  Theorem~VI.3.2 in~\cite{bib:b97} to get uniqueness distribution of $X$.

\textbf{Step 3 (Measurability).}\quad
For the proof of the existence of a solution to \equ{MP1}, we employed an  approximation procedure: We constructed processes $\apmX$ with finitely many jumps (in finite time intervals) from a given noise, and showed convergence along a subsequence $m_n\uparrow\infty$. Due to uniqueness of the limit point (Step 2), we get convergence as $m\to\infty$. Let us denote the corresponding laws (with initial point $x$) by  $P^{m}_x$ and $P_x$.
By the very construction of $\apmX$ it is clear that $x\mapsto P^{m}_x$ is measurable. Hence also the limit $x\mapsto P_x$ is measurable.
\gdm

\paragraph{Proof of Theorems~\ref{T1.1} and \ref{T3.1}.}
Theorems~\ref{T1.1}(a) and \ref{T3.1} follow immediately from  Propositions~\ref{P3.2} and \ref{P4.1}. Theorem~\ref{T1.1}(b) follows from Lemma~\ref{L4.6}.

In order to show the strong Markov property of Theorem~\ref{T1.1}(c), by \cite[Theorem~4.4.2]{bib:kur86}, it is enough to show that the martingale problem \equ{MP1} is well-posed not only for deterministic points $x\in\LBE$, but also for probability measures $\mu\in\CM_1(\LBE)$. The problem is, of course, that for $X_0\sim \mu$ and $y\in\LFE$, in general the process $M^{X_0,y}$ is not well defined, as the integrand $\lb\CA X_s,y\rb H(X_s,y)$ is unbounded.
Hence, we propose a slight modification of \equ{MP1} and assume that $y\in\LIBEPP$, where
$$
\begin{aligned}
\LIBEPP:=\big\{y\in&\LBE:\,\exists c<\infty\mbs{with}\\
& c^{-1}\beta(k)<y_i(k)<c\beta(k)\,\forall\,i=1,2,\,k\in S\big\}
\subset\LIBE.
\end{aligned}
$$
Recall that $\|\CA u\|_\beta\leq \LSC\|u\|_\beta$ for all $u\in([0,\infty)^2)^S$. Hence for all $y\in\LIBEPP$, the map $\LBE\to\C$, $x\mapsto \lb\CA x,y\rb H(x,y)$ is bounded.
Hence for $y\in\LIBEPP$, the process $M^{X_0,y}$ is well defined, and we say
that $X_0$ as a solution to the martingale problem (MP$'$) if $M^{X_0,y}$ is a martingale for all $y\in\LIBEPP$. Arguing as in the proof of Proposition~\ref{P4.7}, we get the duality
\begin{equation}
\label{E4.14}
\E[H(X_t,y)]=\E[H(X_0,Y_t)]\mfa y\in\LIBEPP.
\end{equation}
Note that $\LIBEPP\subset\LIBE$ is dense. Hence \equ{E4.14} determines the distribution of $X_t$. By \cite[Theorem 4.4.2(a)]{bib:kur86}, we infer uniqueness of the finite-dimensional distributions and hence of the solution to (MP$'$). Hence $P_\mu:=\int \mu(dx)\,P_x$ is the unique distribution of any solution to (MP$'$) with $X_0\sim\mu$. That is, the martingale (MP$'$) is well-posed and hence by \cite[Theorem~4.4.2]{bib:kur86}, $(P_x)_{x\in\LBE}$ possesses the strong Markov property.\gdm

\section{Proof of Theorem~\ref{T1.3}}
\label{S5}
\setcounter{equation}{0}
\setcounter{theorem}{0}
First we will show weak uniqueness of the solutions of \equ{E1.26}. Let $X$ be any solution to~\equ{E1.26}
with $X_0=x\in\LBE$. Then by Corollary~\ref{C4.5}, we get that $X$ is also a solution to the martingale problem
\equ{MP1}. However, by Theorem~\ref{T1.1}, the solution to \equ{MP1} is unique in law. Hence also the solution to
\equ{E1.26} is unique in law.

Now we will show the existence of $(X,\CN)$ solving~\equ{E1.26}.
The procedure is pretty much standard and we only sketch the main arguments.

Let $X$ be the unique (in law)  solution to the martingale problem \equ{MP1}.
By Lemma~\ref{L3.8} and Theorem~\ref{T3.1}, we get that
$X$ can be constructed in a way that it also satisfies \equ{E3.4}.
Moreover, we define the point process $\tilde\CN$ by
$$\CN_\Delta(\{k\},dt,A)=\int_{[0,\infty)\times E]} \1_{A\setminus\{0\}}\big( J(y, X_{t-}(k))\big)\,\tilde\CN(\{k\},dt,dy),\mf A\subset \R^2.$$

Let $(k_n, t_n,  x_n)_{n\geq 1}$ be an arbitrary labeling of the points of the point process
$\tilde\CN$. Let $\CN^1$ be a Poisson point process on $S\times\R_+\times\R_+\times E$
independent of $\tilde\CN$ and $X$. Also let  $\{U_n\}_{n\geq 1}$ be a sequence of independent random variables
uniform on $(0,1)$ which are also independent of  $\tilde\CN$ and $X$.

 Define the new point process $\CN$ on $S\times\R_+\times\R_+\times E$ by
\begin{equation}
\label{E5.1}
\begin{aligned}
\CN(dk,dt,dr,dx)
=&\,\sum_{n\geq 1}
  \delta_{\left(k_n,t_n,U_n I(X_{t_n-};k_n),x_n\right)}(dk, dt,dr, dx)\\
&\,+\sum_{n\geq 1} \1_{\{r> I(X_{t_n-};k_n)\}} \CN^1 \big(dk, dt,dr, dx\big).
\end{aligned}
\end{equation}
Both summands in \equ{E5.1} are predictable transformations of point processes of class (QL) (in the sense of \cite[Definition 3.2]{IkedaWatanabe1989}); that is, they possess continuous compensators. Standard arguments yield that they are hence also point processes of class (QL). A standard computation shows that the compensator measures are given by
$$ \ell_S(dk) \,\1_{\{r\leq  I(X_{t-};k)\}} \,\lambda(dt)\,\lambda(dr)\,\nu(dx)
$$
and
$$ \ell_S(dk) \,\1_{\{r\leq  I(X_{t-};k)\}} \,\lambda(dt)\,\lambda(dr)\,\nu(dx),
$$
respectively. Hence $\CN$ is a point process of class (QL) and has the deterministic and absolutely continuous compensator measure
$\ell_S \otimes\lambda\otimes\lambda\otimes\nu$.
By \cite[Theorem 6.2]{IkedaWatanabe1989}, we get that $\CN\,$ is thus a Poisson point process with intensity $\ell_S \otimes\lambda\otimes\lambda\otimes\nu$.
\gdm

\section{Proof of Theorem~\ref{T1.5}}
\label{S6}
\setcounter{equation}{0}
\setcounter{theorem}{0}
Recall that $Y^\gamma$ solves the following system of equations
\begin{equation}
\label{E6.1}
\begin{aligned}
Y^{\gamma}_{i,t}(k)=&\,y_{i,0}(k)+\int_0^t \CA Y^{\gamma}_{i,s}(k)\,ds +\int_0^t \gamma ^{1/2}\sigma( Y^{\gamma}_s(k))\,dW_{i,s}(k),\qquad t\geq 0,\, k\in S,\,i=1,2.
\end{aligned}
\end{equation}

First of all we establish uniform integrability of $Y^\gamma_i$, $i=1,2$.
\begin{lemma}
\label{L6.1}
 For any $T>0$, $p\in(0,2)$ and $i=1,2$, we have
$$
\sup_{\gamma\geq0} \E\left[ \sup_{t\leq T} \big\langle Y^{\gamma}_{i,t}\,,\beta\big\rangle^p\right]<\infty.
$$
\end{lemma}
\paragraph{Proof.}
By simple stochastic calculus, we get
\begin{equation}
\label{E6.2}
\begin{aligned}
e^{-\LSC t}
\big\langle Y^{\gamma}_{i,t}\,,\beta\big\rangle
\;&=\;
\big\langle Y^\gamma_{i,0}\,,\beta\big\rangle+ \int_0^t \left(\big\langle \CA Y^{\gamma}_{i,s}\,, \beta\big\rangle-\LSC\big\langle Y^{\gamma}_{i,s}\,,\beta\big\rangle\right)\,ds\\
&\phantom{=}+\sum_{k\in S}\beta(k)\int_0^t e^{-\LSC s}\,\gamma^{1/2}\, \sigma(Y^{\gamma}_s(k))\,dW_{i,s}(k)\\
&\leq\;
\big\langle Y^\gamma_{i,0}\,,\beta\big\rangle+\sum_{k\in S}\beta(k)\int_0^t e^{-\LSC s}\,\gamma^{1/2}\, \sigma(Y^{\gamma}_s(k))\,dW_{i,s}(k)\\
&\leq\;
\big\langle Y^\gamma_{i,0}\,,\beta\big\rangle+ B_{i,\;\sum_{k\in S}\beta(k)^2\int_0^t e^{-2\LSC s}\,\gamma\, \sigma^2(Y^{\gamma}_s(k))\,ds}
\end{aligned}
\end{equation}
where the second inequality follows by \equ{E1.9} and
 $B_i$, $i=1,2$, are independent Brownian motions. Hence we get that
the pair
\[ \left(e^{-\LSC t}\langle Y^\gamma_{1,t}\,,\beta\rangle, e^{-\LSC t}\langle Y^{\gamma}_{2,t}\,,\beta\rangle\right)\]
is stochastically bounded by the time-changed planar Brownian motion $B$ starting at
 $$B_0=(u,v):=\left( \langle Y^\gamma_{1,0}\,,\beta\rangle, \langle Y^\gamma_{2,0}\,,\beta\rangle\right)$$ and evolving until
 the stopping time
\[ \tau=\inf\{t\geq0:  B_{1,t} B_{2,t}=0\}. \]
For $p\in(1,2)$, by Doob's inequality, we have
\begin{equation}
\label{E6.3}
K_i\equiv \E\left[ \sup_{t\leq \tau}(B_{i,t})^p\right]<\left(\frac{p}{p-1}\right)^p\E \big[( B_{i,\tau})^p\big].
\end{equation}
The $(p/2)$th moment of the exit time of planar Brownian motion from a quadrant is finite if and only if $p<2$ (see, e.g., \cite[Equation (3.8)]{Burkholder1977} with $\alpha=\pi/2$). Hence, using Burkholder's inequality, we get
\begin{equation}
\label{E6.4}
K_i<\infty.
\end{equation}
We can get \equ{E6.4} also by an explicit estimate using the density of the distribution of $\tilde B_\tau$ from \equ{E1.19}:
$$\E\big[(B_{i,\tau})^p\big]\leq |u^2-v^2|^{p/2}\;+\;\frac{2^{p/2}\,(uv)^{p/2}}{\cos(p\pi/4)}.$$

This immediately implies that
\begin{equation}
\label{E6.5}
\E\left[ \sup_{t\leq T}\big\langle Y^{\gamma}_{i,t}\,,\beta\big\rangle^p\right]<
 e^{\LSC T}\, K_i<\infty\mfa i=1,2,
\end{equation}
uniformly in $\gamma\geq0$. \gdm
\begin{lemma}
The family $(Y^\gamma)_{\gamma\geq0}$ is tight in $D_{\LBE}$ equipped with Meyer-Zheng pseudo-path topology.
\end{lemma}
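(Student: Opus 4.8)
The plan is to follow the scheme of the proof of Proposition~\ref{prop:2}(i), with the Skorohod tightness criterion of \cite{bib:kur86} replaced by the Meyer--Zheng criterion of \cite{bib:mz84}. Since $Y^\gamma$ is $\LBT$-valued I would argue in $D_{\LBT}$ equipped with the Meyer--Zheng topology (the compactness criterion for $\LBT$ is that of Lemma~\ref{lem:4a} without the requirement of values in $E^S$), and defer to a separate step — which is where Assumption~\ref{A:1}(ii) is used — the fact that all limit points take values in $\LBE$, so that one may speak of tightness in $D_{\LBE}$. The two things to establish are: (1) a compact containment condition in $\LBT$, and (2) tightness of each coordinate process $\big(Y^\gamma_{\cdot,i}(k)\big)_{\gamma\ge0}$ in $D_\R$ with the Meyer--Zheng topology.

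For (1): Lemma~\ref{lem:15} shows that the stochastic integral in \equ{EX} is a genuine martingale on each $[0,T]$ (a local martingale dominated on $[0,T]$ by an integrable random variable), hence $\E[Y^\gamma_{s,i}]=\cS_s y_{0,i}\le\bS_s y_{0,i}$. Writing $\cA f(k)=\cA^{+}f(k)-\di(k)f(k)$, where $\di(k)=\max\{-\cA(k,k),0\}\ge0$ and $\cA^{+}$ has nonnegative entries with $\cA^{+}\le A$ entrywise, the process $N^\gamma_{t,i}(k):=Y^\gamma_{t,i}(k)+\int_0^t\di(k)Y^\gamma_{s,i}(k)\,ds$ dominates $Y^\gamma_{t,i}(k)$ and is a nonnegative submartingale with $\E[N^\gamma_{T,i}(k)]\le y_{0,i}(k)+\int_0^T A\bS_s y_{0,i}(k)\,ds$. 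Arguing exactly as in Lemma~\ref{lem:3} (Doob's inequality for $\langle N^\gamma_{\cdot,i},\phi\rangle$ with $\phi$ of finite support, then monotone convergence) gives, uniformly in $\gamma$,
\[\P\Big[\sup_{t\le T}\langle Y^\gamma_{t,i},\phi\rangle>K\Big]\le K^{-1}\langle u_i,\phi\rangle,\qquad u_i:=y_{0,i}+\int_0^T A\bS_s y_{0,i}\,ds\in\LB .\]
With $\phi=\beta$ and $\phi=\beta\1_{S\setminus F}$ (using $\|u_i\1_{S\setminus F}\|_\beta\to0$ as $F\uparrow S$) the two conditions of Lemma~\ref{lem:4a} hold, so as in Lemma~\ref{lem:4} there is, for every $\delta>0$, a compact $\Gamma_\delta\subset\LBT$ with $\P[Y^\gamma_t\in\Gamma_\delta\ \mbox{for all }t\le T]\ge1-\delta$, uniformly in $\gamma$.

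For (2): from \equ{EX}, $Y^\gamma_{\cdot,i}(k)=y_{0,i}(k)+\int_0^\cdot\cA Y^\gamma_{s,i}(k)\,ds+(\mbox{martingale})$. The martingale has zero conditional variation; the conditional variation of the drift on $[0,T]$ is at most $\E\int_0^T|\cA Y^\gamma_{s,i}(k)|\,ds\le\int_0^T A\bS_s y_{0,i}(k)\,ds\le\beta(k)^{-1}\|y_{0,i}\|_\beta(e^{MT}-1)$, and $\sup_{t\le T}\E[Y^\gamma_{t,i}(k)]\le e^{MT}\|y_{0,i}\|_\beta/\beta(k)$; all bounds are uniform in $\gamma$. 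By the Meyer--Zheng criterion \cite{bib:mz84}, $\big(Y^\gamma_{\cdot,i}(k)\big)_{\gamma\ge0}$ is then tight in $D_\R$ with the Meyer--Zheng topology, and (the conditional variations being uniformly bounded) every subsequential limit is the image under $\psi$ of a c\`adl\`ag real process.

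Finally, combine (1) and (2). The random measures $\psi(Y^\gamma)$ on $[0,\infty)\times\LBT$ have intensity measures $\int_0^\infty e^{-t}\,\mathrm{Law}(Y^\gamma_t)\,dt$, which by the compact containment are uniformly tight; hence $\{\psi(Y^\gamma)\}$ is tight and along a subsequence $\psi(Y^{\gamma_n})\Longrightarrow\Xi$. Pushing $\Xi$ forward under the continuous maps $(t,\eta)\mapsto(t,\eta_i(k))$ and invoking (2), the image of $\Xi$ under each coordinate is $\psi$ of a c\`adl\`ag real process; since the compact containment forces the corresponding vector path to stay in compacts of $\LBT$, where the product topology agrees with the $\|\ARG\|_\beta$-topology (Lemma~\ref{lem:4a}), one gets $\Xi=\psi(Y)$ for a $D_{\LBT}$-valued $Y$, i.e.\ $Y^{\gamma_n}\Longrightarrow Y$ in the Meyer--Zheng topology; as the subsequence was arbitrary, $(Y^\gamma)_{\gamma\ge0}$ is tight. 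I expect the main obstacle to be precisely this last identification — matching the one-dimensional Meyer--Zheng limit theorem for the coordinates with membership in the infinite-dimensional Skorohod space, and then, separately, checking via Assumption~\ref{A:1}(ii) that the limit is $\LBE$-valued — together with the routine but necessary bookkeeping through Lemma~\ref{lem:15} that legitimizes $\E[Y^\gamma_{s,i}]=\cS_s y_{0,i}$.
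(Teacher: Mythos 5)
Your proposal is correct, and its core coincides with the paper's argument: decompose each coordinate as in \equ{EX} into $y_{0,i}(k)+\int_0^t\cA Y^\gamma_{s,i}(k)\,ds$ plus a martingale, get uniform first-moment bounds from Lemma~\ref{lem:15} (which, as you note, also legitimizes $\E[Y^\gamma_{t,i}]=\cS_t y_{0,i}\le \bS_t y_{0,i}$), and invoke the Meyer--Zheng criterion (Theorem~4, Remark~2 of \cite{bib:mz84}) with the uniformly bounded quantities $\sup_{t\le T}\E|M^\gamma_{t,i}(k)|$ and $\E\int_0^T|\cA Y^\gamma_{s,i}(k)|\,ds$. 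Where you differ is in the superstructure: the paper simply asserts that tightness of the coordinate processes $(Y^\gamma_i(k))_{\gamma\ge0}$ suffices for tightness in the Meyer--Zheng topology and stops there, whereas you import the two-step scheme of Proposition~\ref{prop:2}(i), proving in addition a compact containment estimate (a Doob submartingale argument mirroring Lemmas~\ref{lem:3} and \ref{lem:4}, applied to $N^\gamma_{t,i}(k)=Y^\gamma_{t,i}(k)+\int_0^t\di(k)Y^\gamma_{s,i}(k)\,ds$) and then sketching the identification of limits of the pseudo-path measures $\psi(Y^\gamma)$ as $\psi$ of a c\`adl\`ag $\LBT$-valued path, using that norm and product topologies agree on the compacts of Lemma~\ref{lem:4a}. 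This buys an explicit justification of the passage from coordinatewise Meyer--Zheng tightness to tightness in the infinite-dimensional path space, a point the paper leaves implicit; the paper's route buys brevity. Your decision to work in $D_{\LBT}$ and defer $E^S$-valuedness of limit points to a separate step using Assumption~\ref{A:1}(ii) also matches what the paper actually does, since that fact is only established in the subsequent lemma (via $\int_0^t X_{s,1}(k)X_{s,2}(k)\,ds=0$). Two cosmetic slips: your bound $\E[N^\gamma_{T,i}(k)]\le y_{0,i}(k)+\int_0^T A\bS_s y_{0,i}(k)\,ds$ drops the $\di$ contribution and should read, e.g., $\bS_T y_{0,i}(k)+\int_0^T \di(k)\bS_s y_{0,i}(k)\,ds\le y_{0,i}(k)+2\int_0^T A\bS_s y_{0,i}(k)\,ds$, which changes nothing since the majorant still lies in $\LB$; and the final identification step, which you candidly flag as the main obstacle, is sketched rather than carried out --- but it is exactly the step the paper itself disposes of in one sentence, so this is not a gap relative to the paper's own standard of rigor.
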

\paragraph{Proof.}
The process $M^{\gamma}_i(k)$ defined by
$$
M^{\gamma}_{i,t}(k):=Y^{\gamma}_{i,t}(k)-Y_{i,0}(k)-\int_0^t \CA Y^{\gamma}_{i,s}(k)\,ds
$$
is a martingale. In order to show tightness of $(Y^\gamma)_{\gamma\geq0}$, it is enough to show
tightness of $(Y^\gamma_i(k))_{\gamma\geq0}$ for all $k\in S$ and $i=1,2.$ By Lemma~\ref{L6.1}, the random variable
$\langle Y^{\gamma}_{i,t}\,,\beta\rangle$ has a $p$-th moment for any $p\in(0,2)$, hence we immediately get tightness
of
\[  \int_0^t \CA Y^{\gamma}_{i,s}(k)\,ds.\]
Note that the conditional variation (see, e.g., \cite[page 358]{bib:mz84}) $V^\gamma_{i,T}(k)$ of $M^{\gamma}_{i,t}(k)$ up to time $T$ equals
$$V^\gamma_{i,T}(k)=\sup_{t\leq T}\E[|M^{\gamma}_{i,t}(k)|].$$
Hence, by Theorem~4 of \cite{bib:mz84},
in order to get the tightness of the martingale $M^{\gamma}_{i,t}(k)$ it is enough to show that
\begin{equation}
\label{E6.6} \sup_{\gamma>0}\;\sup_{t\leq T} \E\left[ \big| M^{\gamma}_{i,t}(k)\big|\right]\mfa T>0.
\end{equation}
However,
$$
|M^{\gamma}_{i,t}(k)| \leq |Y^{\gamma}_{i,t}(k)| +|x_{1}(k)|+\bigg|\int_0^t \CA Y^{\gamma}_{i,s}(k)\,ds\bigg|
$$

and \equ{E6.6} again follows immediately by boundedness of $p$-th moments (for $p<2$) of $\langle Y^{\gamma}_{i,t}\,,\beta\rangle$.
\gdm

\begin{lemma}
Let $X$ be an arbitrary limit point of $(Y^{\gamma})_{\gamma\geq0}$. Then
$X$ solves the martingale problem \equ{MP1}.
\end{lemma}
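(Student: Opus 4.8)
The plan is to show that every limit point $X$ of $(Y^\gamma)_{\gamma\ge0}$ in the Meyer--Zheng topology satisfies the martingale problem \equ{MP1}, i.e.\ for each $y\in\LFE$ the process $M^{x,y}$ from \equ{MP1} is a martingale. The driving idea is the standard one for scaling limits of isotropic diffusions towards their absorbing set: the test functions $H(\,\cdot\,,y)$ are asymptotically harmonic for the noise. Precisely, fix $y\in\LFE$ and apply It\^o's formula to $H(Y^\gamma_t,y)=\exp\!\big(\lb Y^\gamma_t,y\rb\big)$. Since $\lb x,y\rb=\sum_k x(k)\mtimes y(k)$ and the lozenge product is a real linear combination of $-(x_1+x_2)(y_1+y_2)$ and $i(x_1-x_2)(y_1-y_2)$, the second-order term in It\^o's formula will involve, at each site $k$ with $y(k)\ne0$, the quantity $\gamma\,\sigma(Y^\gamma_s(k))^2$ times a second derivative of $z\mapsto\exp(z\mtimes y(k))$ evaluated at $z=Y^\gamma_s(k)$. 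The key algebraic fact is that $z\mapsto\exp(z\mtimes y(k))$ is \emph{harmonic} on $\R^2$ for the Laplacian (as already used in the proof of Lemma~\ref{L3.13}), because $\partial_{z_1}^2+\partial_{z_2}^2$ applied to $\exp(-(z_1+z_2)c + i(z_1-z_2)d)$ gives $\big((c+id)^2+(c-id)^2\big)\exp(\cdots)=0$ when we set $c=y_1(k)+y_2(k)$, $d=y_1(k)-y_2(k)$... wait, one must check the precise combination, but this is exactly the computation underlying the choice of the lozenge product. Consequently the $\gamma$-dependent second-order contributions cancel \emph{identically}, and It\^o's formula yields that
\[
\widetilde M^\gamma_t:=H(Y^\gamma_t,y)-H(Y^\gamma_0,y)-\int_0^t\lb\CA Y^\gamma_s,y\rb\,H(Y^\gamma_s,y)\,ds
\]
is a (local, then true) martingale, with no trace of $\gamma$ in the drift. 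Here one uses that $H$ is bounded (the real part of $\lb Y^\gamma_s,y\rb$ is $\le0$ since $Y^\gamma_s\ge0$) and that $|\lb\CA Y^\gamma_s,y\rb|\le\langle A(Y^\gamma_{s,1}+Y^\gamma_{s,2}),|y_1|+|y_2|\rangle$, which by Lemma~\ref{lem:15} (and $\|\CA u\|_\beta\le M\|u\|_\beta$, \equ{equt:85}) has integrable supremum over $[0,t]$ uniformly in $\gamma$, so $\widetilde M^\gamma$ is a genuine martingale.

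Next I would pass to the limit along the chosen sequence $\gamma_n\to\infty$. By Skorohod representation (in the Meyer--Zheng / pseudo-path setting), we may realize $Y^{\gamma_n}$ and $X$ on one probability space so that $Y^{\gamma_n}\to X$ for Lebesgue-almost every $t$, and by Lemma~\ref{lem:15} the coordinate sums $\langle Y^{\gamma_n}_{t,i},\beta\rangle$ are bounded in $L^p$ ($p<2$) uniformly in $n$ and $t$ on compacts. For fixed $y\in\LFE$, $H(\,\cdot\,,y)$ is bounded and continuous, and $\lb\CA\,\cdot\,,y\rb\,H(\,\cdot\,,y)$ is bounded by $C\langle Y^{\gamma_n}_{s,1}+Y^{\gamma_n}_{s,2},|y_1|+|y_2|\rangle$, a uniformly integrable family; so the integral term converges in $L^1$ after integrating in $s$ (almost-everywhere pointwise convergence in $s$ plus uniform integrability, using that $y$ has finite support). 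Hence $\widetilde M^{\gamma_n}_t\to M^{x,y}_t$ in $L^1$ for each $t$ outside a Lebesgue-null set, and the martingale property is preserved under $L^1$ limits; a routine argument (test against $\cF_s$-measurable bounded functions at continuity times, then extend by right-continuity of $X$, which lies in $D_{\LBE}$) shows $M^{x,y}$ is a martingale with $M^{x,y}_0=0$. Since $y\in\LFE$ was arbitrary, $X$ solves \equ{MP1}. (Combined with Theorem~\ref{thr:2}, this identifies the limit uniquely as the process of that theorem, and since every subsequence has a further subsequence converging to this same limit, $Y^{\gamma_n}\Rightarrow X$, proving Theorem~\ref{thr:5}.)

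The main obstacle is the passage to the limit in the \emph{drift} term $\int_0^t\lb\CA Y^{\gamma_n}_s,y\rb H(Y^{\gamma_n}_s,y)\,ds$: Meyer--Zheng convergence only gives convergence of $Y^{\gamma_n}_s$ for a.e.\ $s$, so one must upgrade this to $L^1$-convergence of the time integral, and for that the uniform-in-$\gamma$ moment bound of Lemma~\ref{lem:15} (valid precisely for $p<2$, which is exactly the integrability of the planar Brownian exit time from a quadrant) is essential; the finite support of $y$ is what makes the dominating function genuinely integrable over $S$. A secondary point requiring care is verifying that the $\gamma$-dependent quadratic-variation contribution in It\^o's formula really vanishes identically and not merely in expectation — this is a direct computation exploiting that the noise is isotropic (so the cross term $dW_{s,1}dW_{s,2}$ is absent and the two diagonal second derivatives enter with equal weight $\gamma\sigma^2$), together with the harmonicity of $z\mapsto e^{z\mtimes y(k)}$; it uses only Assumption~\ref{A:1}(i),(ii) and not the product form of $\sigma$, which is why the theorem holds in the stated generality.
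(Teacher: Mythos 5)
Your first half reproduces the paper's argument: It\^o's formula applied to $H(Y^\gamma_\cdot,y)$, with the second--order terms vanishing because $z\mapsto e^{z\mtimes y(k)}$ is harmonic (note this uses $y(k)\in E$, i.e.\ $y_1(k)y_2(k)=0$, which is exactly why the test functions are taken in $\LFE$), so that the analogue of $M^{x,y}$ is a martingale for each finite $\gamma$ with no $\gamma$-dependence in the drift; and then a Meyer--Zheng limit argument using the uniform $p$-th moment bounds of Lemma~\ref{lem:15} ($p<2$) to preserve the martingale property (the paper invokes Theorem~11 of \cite{bib:mz84} where you argue via Skorohod representation and uniform integrability, but this is essentially the same content).

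However, there is a genuine gap: this only shows that $X$ is a $([0,\infty)^2)^S$-valued process for which $M^{x,y}$ is a martingale for every $y\in\LFE$. The martingale problem \equ{MP1} requires a solution in $D_{\LBE}$, i.e.\ with $X_t(k)\in E$ (at most one type per site) for all $t$ and $k$, and the approximating processes $Y^\gamma$ are only $\LBT$-valued, so the limit is not automatically $E^S$-valued. Without this, the conclusion of the lemma does not hold in the required sense, and the subsequent identification of the limit via the duality/uniqueness of Theorem~\ref{thr:2} breaks down, since the family $\{H(\ARG,y):y\in\LFE\}$ is measure determining only on $E$-valued configurations. The paper devotes the entire second half of its proof to this point: tightness of the martingales $M^{\gamma,y,z}(k)$ forces the quadratic variations $\int_0^t\gamma\,\sigma^2(Y^\gamma_s(k))\,ds$ to be stochastically bounded uniformly in $\gamma$, hence $\int_0^t\sigma^2(Y^{\gamma_n}_s(k))\,ds\to0$ as $\gamma_n\to\infty$; by Assumption~\ref{A:1}(ii) together with the moment bound \equ{E6.5} this yields $\int_0^t\big(Y^{\gamma_n}_{s,1}(k)Y^{\gamma_n}_{s,2}(k)\big)\wedge1\,ds\to0$, so in the limit $\int_0^t X_{s,1}(k)X_{s,2}(k)\,ds=0$, giving $X_{s,1}(k)X_{s,2}(k)=0$ for a.e.\ $s$ and then, by right-continuity of paths, $X_t\in E^S$ for all $t\geq0$. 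You would need to add this step (or an equivalent one) for your proposal to prove the stated lemma.
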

\paragraph{Proof.}
Let $\gamma_n\to\infty$ be such that $Y^{\gamma_n}$ converges to $X$ as $n\to\infty$.
By It\^o's formula, for $z\in\LFT$ (recall \equ{E1.13}), the process $M^{\gamma,y,z}$ defined by
\begin{equation}
\label{E6.7}
M^{\gamma,y,z}_t=H(Y^{\gamma}_t,z)-H(Y^{\gamma}_0,z)-\int_0^t \lb\CA Y^\gamma,\,z\rb\, H(Y^\gamma_s,z)\,ds
\end{equation}
is a martingale. Since $(Y^{\gamma_n})_{n\in\N}$ converges to $X$, the right hand side of \equ{E6.7} converges to $M^{y,z}$. As the $p$-th moments $\langle Y^\gamma_t\,,\beta\rangle$ (for $p\in(0,2)$) are uniformly bounded (in $\gamma$), also the $p$-th moments of $M^{\gamma,y,z}$ are uniformly bounded. By \cite[Theorem 11]{bib:mz84}, we infer that
$M^{x,z}=\lim_{n\to\infty}M^{\gamma_n,x,z}$ is a martingale. In other words, $X$ is a $[0,\infty)^2$-valued solution to the martingale problem \equ{MP1}.
It remains to show that $ X_t\in E$ for all $t>0$, $k\in S$.
Recall that we derived the tightness of the martingales $M^{\gamma,y,z}_t(k)$. But this implies that the
quadratic variation of $M^{\gamma,y,z}(k)$ is stochastically bounded uniformly in $\gamma$; that
is,
\[ \int_0^t \gamma\, \sigma^2(Y^{\gamma}_s(k))\, ds\]
is uniformly bounded in $\gamma$. Since $\gamma_{n}\rightarrow\infty$, this implies
$$
 \int_0^t  \sigma(Y^{\gamma_n}_{s}(k))\, ds \limn 0.
$$
By Assumption \ref{A1.4}(ii) and \equ{E6.5}, this implies
$$
 \int_0^t \big[ \big(Y^{\gamma_n}_{1,s}(k)\,Y^{\gamma_n}_{2,s}(k)\big)\wedge1 \big]\, ds\limn 0.
$$
On the other hand, we have
$$
 \int_0^t  \big[\big(Y^{\gamma_n}_{1,s}(k)\,Y^{\gamma_n}_{2,s}(k)\big)\wedge1 \big] \, ds\limn \int_0^t  \big[\big(X_{1,s}(k)\,X_{2,s}(k)\big)\wedge 1\big]\, ds,
$$
hence
$$
 \int_0^t  X_{1,s}(k)X_{2,s}(k)\, ds= 0.
$$
Thus
$$
  X_{1,s}(k)X_{2,s}(k)= 0,
$$
for almost every $s$. Since the limiting process $X$ is c{\`a}dl{\`a}g, we have $X_t\in E^S$ for \emph{all} $t\geq0$ almost surely.
\gdm

The above lemma finishes the proof of Theorem~\ref{T1.5}.

\section*{Appendix A: Properties of the jump measure}
\setcounter{theorem}{0}
\setcounter{equation}{0}
\renewcommand{\thetheorem}{A.\arabic{theorem}}
\renewcommand{\theequation}{A.\arabic{equation}}
\label{App:A}
Recall the measure $\nu$ from \equ{E1.20}.
For ease of reference, we collect some basic facts on the moments of $\nu$.
\begin{lemma}
\label{LA.1}
Let $\ve>0$. We have
$$
\nu\big(\{0\}\times(\ve,\infty)\big)\;=\;\frac{2}{\pi}\frac{1}{(1+\ve)^2}\;\leq\; \frac{2}{\pi}\big(1\wedge \ve^{-2}\big),
$$
and
$$
\begin{aligned}
\nu\big(([0,\infty)\setminus(1-\ve,1+\ve))\times \{0\}\big)&=
\cases{
\frac{8}{\pi}\frac{1}{\ve(4-\ve^2)}-\frac2\pi,&\mfalls\ve\leq 1,\\[2mm]
\frac{2}{\pi}\frac{1}{\ve(2+\ve)},&\mfalls \ve\geq 1,
}\\
&\leq\frac{2}{\pi}\big(\ve^{-1}\wedge\ve^{-2}\big).
\end{aligned}
$$
\end{lemma}
\begin{proof}This is simple calculus.\end{proof}
\begin{lemma}
\label{LA.2}
For $x\geq0$, we have
$$\int y_2\,\nu(dy)=1,$$
$$\int_{\{0\}\times(0,x)}y_2\,\nu(dy)\;=\;\frac2\pi\arctan(x)-\frac2\pi\frac{x}{1+x^2},$$
and
$$\int_{\{0\}\times(0,x)}y^2_2\,\nu(dy)\;=\;\frac2\pi\log(1+x^2)-\frac2\pi\frac{x^2}{1+x^2}\;\leq\;\frac4\pi\,\log(x),$$
where the inequality holds if $x\geq2$.
\end{lemma}
\begin{proof}This is simple calculus.\end{proof}
\begin{lemma}
\label{LA.3}
For $\ve>0$, we have
\begin{equation}
\label{EA.1}
\int_{\{y_1\geq 1+\ve\}}(y_1-1)\,\nu(dy)=\frac1\pi\left(1+\log\left(\frac{2+\ve}{\ve}\right)-\frac{\ve}{2+\ve}\right).
\end{equation}
For $\ve\in(0,1)$, we have
\begin{equation}
\label{EA.2}
\int_{\{y_1\leq 1-\ve\}}(y_1-1)\,\nu(dy)=\frac1\pi\left(-1+\log\left(\frac{\ve}{2-\ve}\right)-\frac{\ve}{2-\ve}\right).
\end{equation}
Hence for $\ve\in(0,1)$, there exists an $\ve':=\ve'(\ve)\in[\ve/2,\ve]$ such that
\begin{equation}
\label{EA.3}
\int_{\{y_1\not\in(1-\ve,1+\ve')\}}(y_1-1)\,\nu(dy)=0.
\end{equation}
\end{lemma}
\begin{proof}By elementary calculus, we get \equ{EA.1} and \equ{EA.2}.
Using the explicit expressions, it is easy to check that
$$\int_{\{y_1\not\in(1-\ve,1+\ve)\}}(y_1-1)\,\nu(dy)\leq0
\leq\int_{\{y_1\not\in(1-\ve,1+\ve/2)\}}(y_1-1)\,\nu(dy).
$$
By continuity, \equ{EA.3} holds for some $\ve'\in[\ve/2,\ve]$.
\end{proof}

Note that letting $\ve\to0$ in Lemma~\ref{LA.3} yields that $\int (y_1-1)\nu(dy)=0$ in the sense of a Cauchy principal value.
\begin{lemma}
\label{LA.4}
For $x>0$, we have
$$
\int_{(0,x)\times\{0\}}(y_1-1)^2\,\nu(dy)\;=\;
\frac{4}{\pi}\left(\log(1+x)-\frac{x}{1+x}\right).
$$
Hence, for $\ve\in(0,1)$, we get
$$\int_{(1-\ve,1+\ve)\times\{0\}}(y_1-1)^2\,\nu(dy)
=\frac{4}{\pi}\left(\log\left(\frac{2+\ve}{2-\ve}\right)-\frac{2\ve}{4-\ve^2}\right)
\;\leq\;\frac{2}{\pi}\,\ve$$
\end{lemma}
\begin{proof}This is simple calculus.\end{proof}
\begin{lemma}
\label{LA.5}
For $p\in(1,2)$, we have
\begin{equation}
\label{EA.4}
m_{1,p}:=\int_E|y_1-1|^p\,\nu(dy)
\leq \frac{4}{\pi}\frac{p^2-2p+2}{p(p-1)(2-p)}<\infty.
\end{equation}
and
\begin{equation}
\label{EA.5}
m_{2,p}:=\int_Ey_2^p\,\nu(dy)={\frac {\pi \,p \left(p-1 \right) }{\sin \left( \pi \,p \right) }}<\infty.
\end{equation}
\end{lemma}\begin{proof}
Note that
$$m_{1,p}\leq \frac{4}{\pi}\int_0^\infty \frac{v}{(1+v^2)^2}dv
+\frac{4}{\pi}\int_0^1 u(1-u)^{2-p}\,du
+\frac{4}{\pi}\int_0^\infty \frac{u(1-u)^{2-p}}{(1+u)^2}\,du
$$
and the right hand side equals the right hand side of \equ{EA.4}.
The formula for $m_{2,p}$ can be derived by an explicit calculation using (i).
\end{proof}
\begin{corollary}
\label{CA.6}
Recall $J$ from \equ{E1.23}. For any $L>0$, we have
$$\nu\big(\big\{y:\|J(y,(1,0))\|_\infty\geq L\big\}\big)\leq \frac4\pi \,L^{-2}.$$
\end{corollary}
\begin{proof}
This is a direct consequence of the definition of $J$ (see \equ{E1.23}) and Lemma~\ref{LA.1}.
\end{proof}

\section*{Appendix B: Proof of Lemma~\ref{L1.2}.}
Let $x=(u,v)\in(0,\infty)^2$. Explicit integration of \equ{E1.19} yields
$$Q_{(u,v)}(\{0\}\times[\bar v,\infty))=\frac12+\frac1\pi\arctan\left(\frac{v^2-u^2-\bar v^2}{2uv}\right)$$
and
$$Q_{(u,v)}([\bar u,\infty)\times\{0\})=\frac12+\frac1\pi\arctan\left(\frac{u^2-v^2-\bar u^2}{2uv}\right).$$
This yields
$$Q_{(u,v)}(E)=\frac12+\frac12+\frac1\pi\arctan\left(\frac{v^2-u^2}{2uv}\right)+\frac1\pi\arctan\left(\frac{u^2-v^2}{2uv}\right) =1.$$
Hence $Q$ as defined in \equ{E1.19} is in fact a probability measure. Furthermore, for $u_0>0$ and $\ve\in(0,u_0)$, we have
$$\begin{aligned}
Q_{(u,v)}\big(\{0\}\times(u_0-\ve,u_0+\ve)\big)&=\frac1\pi\arctan\left(\frac{(u_0+\ve)^2-u^2+v^2}{2uv}\right)
-\frac1\pi\arctan\left(\frac{(u_0-\ve)^2-u^2+v^2}{2uv}\right)\\
&\longrightarrow \frac1\pi\arctan(\infty)-\frac1\pi\arctan(-\infty)=1\quad\mbox{as }(u,v)\to (u_0,0).\end{aligned}
$$
Hence $Q_{(u,v)}\to \delta_{(u_0,0)}$ as $(u,v)\to(u_0,0)$. By symmetry, we also have $Q_{(u,v)}\to \delta_{(0,v_0)}$ as $(u,v)\to(0,v_0)$. Finally, explicitly computing second derivatives gives
$$\begin{aligned}
\Big(\frac{\partial^2}{\partial u^2}+&\frac{\partial^2}{\partial v^2}\Big)
\frac{\TS uv\,\bar u}{\textstyle 4u^2v^2+\big(\bar u^2+ v^2-u^2\big)^2}
\\
&=-\frac {4uv\,\bar{u} \left( 14\,{u}^{2}{v}^{2}\,{\bar{u}}^{2}-3\,{u}^{6}-3\,{\bar{u}}^{6}
+3\,{v}^{6}-3\,{u}^{4}{v}^{2}+3\,{u}^{2}{\bar{u}}^{4}+3\,{u}^{4}{\bar{u}}^{2}+3\,{
u}^{2}{v}^{4}-3\,{\bar{u}}^{4}{v}^{2}+3\,{\bar{u}}^{2}{v}^{4} \right) }{ \left( 4u^2v^2+\big(\bar u^2+ v^2-u^2\big)^2 \right) ^{3}}\\
&\phantom{=}+
\frac {4uv\,\bar{u} \left( 14\,{u}^{2}{v}^{2}\,{\bar{u}}^{2}-3\,{u}^{6}-3\,{\bar{u}}^{6}+
3\,{v}^{6}-3\,{u}^{4}{v}^{2}+3\,{u}^{2}{\bar{u}}^{4}+3\,{u}^{4}{\bar{u}}^{2}+3\,{u
}^{2}{v}^{4}-3\,{\bar{u}}^{4}{v}^{2}+3\,{\bar{u}}^{2}{v}^{4} \right) }{ \left( 4u^2v^2+\big(\bar u^2+ v^2-u^2\big)^2 \right) ^{3}}\\
&=0.\end{aligned}
$$
Hence, the function in \equ{E1.19} is indeed harmonic.
\gdm

\section*{Acknowledgement}
We would like to thank an anonymous referee who helped considerably to debug the paper and to improve the exposition.

\end{document}